\theoremstyle{definition}
\def\fnum{equation} 
\newtheorem{Thm}[\fnum]{Theorem}
\newtheorem{Cor}[\fnum]{Corollary}
\newtheorem{Lem}[\fnum]{Lemma}
\newtheorem{Con}[\fnum]{Conjecture}
\newtheorem{Rem}[\fnum]{Remark}
\newtheorem{Pro}[\fnum]{Proposition}
\numberwithin{equation}{section}
\newcommand{\Vol}{{\text{Vol}}}
\newcommand{\nn}{{\bf{n}}}
\newcommand{\dist}{{\text {dist}}}
\newcommand{\Hess}{{\text {Hess}}}
\def\RR{{\bold R}}
\def\SS{{\bold S}}
\newcommand{\dv}{{\text {div}}}
\newcommand{\e}{{\text {e}}}
\newcommand{\graph}{{\bf{r}}_{\ell}}
\newcommand{\cC}{{\mathcal{C}}}
\newcommand{\cL}{{\mathcal{L}}}
\newcommand{\eqr}[1]{(\ref{#1})}
\newcommand{\eps}{\epsilon}
\title[\L{}ojasiewicz inequalities and applications]{\L{}ojasiewicz inequalities and applications}
\author{Tobias Holck Colding}%
\address{MIT, Dept. of Math.\\
77 Massachusetts Avenue, Cambridge, MA 02139-4307.}
\author{William P. Minicozzi II}%
\thanks{The  authors
were partially supported by NSF Grants DMS  11040934, DMS 1206827,  and NSF FRG grants DMS 
 0854774 and DMS 0853501}
\email{colding@math.mit.edu and minicozz@math.mit.edu}
\begin{document}

\maketitle
 
\begin{abstract}
In real algebraic geometry, Lojasiewicz's theorem asserts that any integral curve of the gradient flow of an analytic function that has an accumulation point has a unique limit.   Lojasiewicz proved this result in the early 1960s as a consequence of his gradient inequality.  

Many problems in calculus of variations are questions about critical points or gradient flow lines of an infinite dimensional functional.    Perhaps surprisingly, even blowups at singularities of many nonlinear PDE's can, in a certain sense, be thought of as limits of infinite dimensional gradient flows of analytic functionals.  
Thus, the question of uniqueness of blowups can be approached as an infinite dimensional version of Lojasiewicz's theorem.
The question of uniqueness of blowups is perhaps the most fundamental question about singularities.

 This approach to uniqueness was pioneered by Leon Simon thirty years ago 
 for the area functional and many related functionals
 using an elaborate reduction to a finite dimensional setting where Lojasiewicz's arguments applied.

Recently, the authors proved two new infinite dimensional Lojasiewicz inequalities at noncompact singularities where it was well-known that a reduction to Lojasiewicz's arguments is not possible, but instead entirely new techniques are required.  As a consequence, the authors settled a major long-standing open question about uniqueness of blowups for mean curvature flow (MCF) at all generic singularities and for mean convex MCF  at all singularities.   Using this, the authors have obtained a rather complete description of the space-time singular set for MCF with generic singularities.  In particular, the singular set of  a MCF in $\RR^{n+1}$ with only generic singularities is contained in finitely many compact Lipschitz submanifolds of dimension at most $n-1$ together with a set of dimension at most $n-2$.
\end{abstract}

\section{Finite and infinite dimensional inequalities}

  \subsection{Lojasiewicz inequalities}

    In real algebraic geometry, the Lojasiewicz inequality, \cite{L1}, \cite{L2}, \cite{L4}, from the late 1950s named after Stanislaw Lojasiewicz, gives an upper bound for the distance from a point to the nearest zero of a given real analytic function. Specifically, let $f: U \to \RR$ be a real-analytic function on an open set $U$ in $\RR^n$, and let $Z$ be the zero locus of $f$. Assume that $Z$ is not empty. Then for any compact set $K$ in $U$, there exist $\alpha\geq 2$ and a positive constant $C$ such that, for all $x \in K$
  \begin{align}	\label{e:Lj1}
  \inf_{z\in Z} |x-z|^{\alpha}\leq C\, |f(x) | \, .
  \end{align}
Here $\alpha$ can be large.   

 Equation \eqr{e:Lj1} was the main inequality in Lojasiewicz's proof of Laurent Schwarz's division conjecture\footnote{L. Schwartz conjectured that if $f$ is a non-trivial real analytic function and $T$ is a distribution, then there exists a distribution $S$ 
satisfying $f\, S$ = $T$.} in analysis.  
Around the same time, 
H\"ormander, \cite{Ho}, independently proved Schwarz's division conjecture in the special case of polynomials and a  key step in his proof was also 
 \eqr{e:Lj1}  when $f$ is a polynomial.

A few years later,  Lojasiewicz solved a conjecture of Whitney\footnote{Whitney conjectured that if $f$ is analytic  in an 
open set $U$ of $\RR^n$, then
 the zero set $Z$ is a deformation retract of an open neighborhood of $Z$ in 
$U$.} in \cite{L3} using
 the following inequality\footnote{Lojasiewicz called this inequality the gradient inequality.}: With the same assumptions on $f$, for every $p\in U$, there are a possibly smaller neighborhood $W$ of $p$ and constants $\beta \in (0,1)$ and $C> 0$ such that for all $x\in W$
 \begin{align}	\label{e:016}
  |f(x)-f(p)|^{\beta}\leq C\, |\nabla_x f | \, .
  \end{align}
  Note that this inequality is trivial unless $p$ is a critical point for $f$.   
  
 One immediate consequence of \eqr{e:016} is that every critical point of $f$
  has a neighborhood where 
  every other critical point has the same value.     It is easy to construct smooth functions where this is not the case.

  \subsection{First Lojasiewicz  implies the second}  \label{ss:1to2}

In this subsection, we will explain how the second Lojasiewicz inequality for a 
function $f$ in a neighborhood of an isolated critical point
follows from the first.  To make things concrete, we will show that the second holds
 with $\beta=\frac{2}{3}$
when the first holds    with $\alpha=2$.   

Suppose that $f:\RR^n\to \RR$ is  smooth function with $f(0)=0$ and $\nabla f(0)=0$; 
without loss of generality, we may assume that  the Hessian is in diagonal form at $0$
and we will write the coordinates as $x=(y,z)$ where $y$ are the coordinates where the 
Hessian is nondegenerate.  By Taylor's formula in a small neighborhood of $0$, we have that
\begin{align}
f(x)&=\frac{a_i}{2}\, y_i^2+O(|x|^3)\, .\\
f_{y_i}(x)&=a_i\, y_i+O(|x|^2)\, .\\
f_{z_i}(x)&=O(|x|^2)\, .
\end{align}
It follows from this that the second of the two Lojasiewicz 
inequalities holds for $f$ and $\beta =\frac{2}{3}$ provided that $|z|^2\leq \epsilon\, |y|$ 
for some sufficiently small $\epsilon> 0$.  Namely, if $|z|^2\leq \epsilon\, |y|$, then
\begin{align}
  C \, |y| \leq	 |\nabla_x f| {\text{ and }} |f(x)|\leq C^{-1} \,|y|^{\frac{3}{2}}
\end{align}
for some positive constant $C$ and, hence,
\begin{align}
|f(x)|^{\frac{2}{3}}\leq C\,|\nabla_x f |\, .
\end{align}
Therefore, we only need 
to prove the second Lojasiewicz inequality for $f$ 
in the region  $|z|^2\geq \epsilon\, |y|$.  We will do this
using
 the first  Lojasiewicz inequality for $\nabla f$. 
Since $0$ is an isolated critical point for $f$, 
the first Lojasiewicz inequality for $\nabla f$ gives that 
\begin{align}
|\nabla_x f|\geq C\, |x|^2\, .
\end{align}
By assumption on the region and the Taylor expansion for $f$, we get that in this region 
\begin{align}
|f(x)|\leq C\,|y|^2+C\,|z|^3\leq C\,|z|^3\leq C\,|x|^3\, .
\end{align}
 Combining these two inequalities gives
 \begin{align}
|f(x)|^{\frac{2}{3}}\leq C\,|x|^2\leq |\nabla_x f|\, .
\end{align}
This proves the second Lojasiewicz inequality for $f$ with $\beta = \frac{2}{3}$.

\vskip2mm
 Lojasiewicz used his second inequality to show the ``Lojasiewicz theorem'':  If $f:\RR^n\to \RR$ is an analytic function, $x=x(t):[0,\infty)\to \RR^n$ is a curve with $x'(t)=-\nabla f$ and $x(t)$ has a limit point $x_{\infty}$, then the length of the curve is finite and 
 \begin{align}
 	\lim_{t\to \infty}x(t)=x_{\infty} \, .
\end{align}
  Moreover, $x_{\infty}$ is a critical point for $f$.
  
  In contrast, it is easy to construct smooth functions, even on $\RR^2$, where the Lojasiewicz theorem fails, i.e., where there are negative gradient flow lines  that have more than one limit point (and, thus, also have infinite length); see Figure \ref{f:1}.
  
   \begin{figure}[htbp]		
\centering\includegraphics[totalheight=.4\textheight, width=1\textwidth]{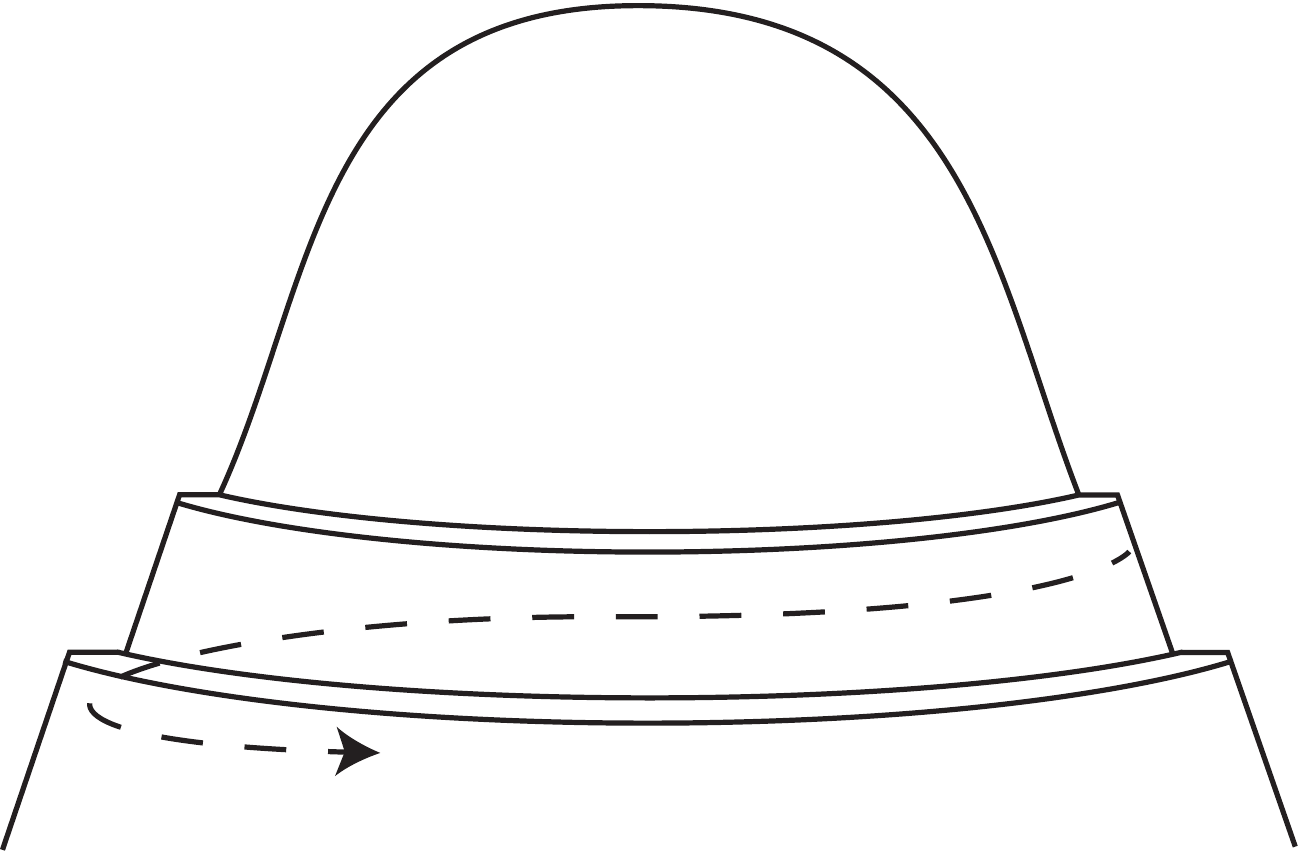}
\caption{There are smooth functions  vanishing on an open (compact) set for which the gradient flow lines spiral around the zero locus.  The flow lines have infinite length and the Lojasiewicz theorem fails.}   \label{f:1}
  \end{figure}

 \subsection{The Lojasiewicz Theorem}
 
 Next we will   explain how the second Lojasiewicz inequality is 
 typically used to show uniqueness.  
 Before we do that, observe first that in the second inequality we always work in 
 a small neighborhood of $p$ so that, in particular, $ |f(x)-f(p)|\leq 1$ and 
 hence smaller powers on the left hand side of the inequality imply the inequality for higher powers.  
 As it turns out, we will see that any positive power strictly less than one would do for uniqueness.

 Suppose   that   $f:\RR^n \to \RR$ is a differentiable function. 
 Let $x=x(t)$ be a curve in $\RR^n$ parametrized on $[0,\infty)$ whose velocity $x'=-\nabla f$. 
 We would like to show that if the second inequality of Lojasiewicz holds for $f$ with a power
 $1>\beta> 1/2$, then the Lojasiewicz theorem mentioned above holds. That is, if $x(t)$ has a limit 
 point $x_{\infty}$, then the length of the curve is finite and $\lim_{t\to \infty}x(t)=x_{\infty}$.  
Since $x_{\infty}$ is a 
 limit point of $x(t)$ and $f$ is non-increasing along the curve, $x_{\infty}$ 
must be a critical point for $f$.   

The  length of the curve $x(t)$ is $\int |\nabla f|$, so we must show that $\int_{1}^{\infty}|\nabla f| \,ds$ is finite. Assume that $f(x_{\infty})=0$ and note that if we 
set $f(t)=f(x(t))$, then $f'=-|\nabla f|^2$.  Moreover, by the second Lojasiewicz 
inequality, we get that $f' \leq - f^{2\beta}$ if $x(t)$ is sufficiently close to
$x_{\infty}$.  (Assume for simplicity below that $x(t)$ stays
in a small neighborhood $x_{\infty}$ for $t$ sufficiently large so that this inequality holds; the general case follows with trivial changes.)  Then this inequality can be rewritten as $(f^{1-2\beta})' \geq (2\beta-1)$ 
which integrates to
\begin{align}    \label{e:Ldecay}
  f(t) \leq C\, t^{\frac{-1}{2\beta-1}} \, .
\end{align}

We need to show that \eqr{e:Ldecay} implies that $\int_{1}^{\infty}|\nabla f| \,ds$ is finite.  
This shows that $x(t)$ converges to $x_{\infty}$ as $t\to \infty$.  
To see that $\int_{1}^{\infty}|\nabla f| \,ds$ is finite, observe  by the Cauchy-Schwarz inequality that
\begin{align}
  \int_{1}^{\infty}|\nabla f|\,ds =\int_{1}^{\infty}\sqrt{-f'}\,ds
    \leq \left(-\int_{1}^{\infty}f'\, s^{1+\epsilon}\,ds\right)^{\frac{1}{2}}
       \left(\int_{1}^{\infty} s^{-1-\epsilon}\,ds\right)^{\frac{1}{2}} \, .
\end{align}
It suffices therefore to show that
\begin{align}
-\int_{1}^{T} f'\, s^{1+\epsilon}\,ds
\end{align}
is uniformly bounded.  
Integrating by parts gives
\begin{align}
    \int_{1}^{T} f'\, s^{1+\epsilon}\,ds=|f\, s^{1+\epsilon}|^{T}_1-(1+\epsilon)
     \int_{1}^{T} f\, s^{\epsilon}\,ds\, .
\end{align}
If we choose $\epsilon> 0$ sufficiently small depending on $\beta$, 
then we see that this is bounded independent of $T$ and hence $\int_{1}^{\infty}|\nabla f|\,ds$ is finite.

 \subsection{Infinite dimensional Lojasiewicz inequalities and applications}
 
 Many problems in geometry and the calculus of variations are essentially questions about functionals on infinite dimensional spaces, such as the energy functional on the space of mappings or the area functional on the space of graphs over a hypersurface.
  Infinite dimensional versions of Lojasiewicz inequalities,  
 proven in a celebrated work of Leon Simon, \cite{Si1}, have played an important role in these areas over the last 30 years.   Clearly, the infinite dimensional inequalities have immediate applications to uniqueness of limits for gradient flows, but, perhaps surprisingly, they also have implications for singularities of nonlinear PDE's.

 \vskip2mm
Once singularities occur one naturally wonders what the singularities are like. A standard technique for analysing singularities is to magnify around them.  Unfortunately, singularities in many of the interesting problems in Geometric-PDE looked at under a microscope will resemble one blowup, but under higher magnification, it 
might (as far as anyone knows) resemble a completely different blowup. Whether this ever 
happens is perhaps the most fundamental question about singularities; see, e.g., \cite{Si2} and \cite{Hr}.    By general principles, the set of blowups is connected and, thus, the difficulty for uniqueness is when the blowups are not isolated in the space of blowups.  

One of the first major results 
on uniqueness was by Allard-Almgren in 1981, \cite{AA}, where uniqueness of tangent cones  with smooth 
 cross-section for minimal varieties is proven under an additional integrability assumption on the cross-section.  The integrability condition applies in a number of important cases, but it is difficult to check and is not satisfied in many other important cases.

Perhaps surprisingly,  blowups for a number of important Geometric PDE's can essentially be reformulated as infinite dimensional gradient flows of analytic functionals.  Thus, the uniqueness question would follow from an infinite dimensional version of Lojasiewicz's theorem for gradient flows of analytic functionals.   
 Infinite dimensional versions of Lojasiewicz inequalities were
 proven in a celebrated work of Leon Simon, \cite{Si1},
 for the area, energy,  and related functionals and
 used, in particular,  to prove a fundamental result about
 uniqueness of tangent cones with smooth cross-section of minimal surfaces.  This holds, for instance, at all singular points of an area-minimizing 
 hypersurface in $\RR^8$.
  
 Lojasiewicz inequalities follow easily near a critical point where the Hessian is uniformly non-degenerate
  (this is the infinite dimensional analog of a 
 non-degenerate critical point where the Hessian is full rank).  The difficulty is dealing with the directions in the kernel of the Hessian.  In the cases that Simon considers, the Hessian has finite dimensional kernel by elliptic theory.  The rough idea of his approach is to use the easy argument in the (infinitely many) directions where the Hessian is invertible and use the classical Lojasiewicz inequalities on the finite dimensional kernel.
 He makes this rigorous  by reducing the infinite
 dimensional version to the classical Lojasiewicz inequality using Lyapunov-Schmidt
  reduction.  Infinite dimensional  Lojasiewicz
 inequalities proven using  Lyapunov-Schmidt 
 reduction, as in the work of Simon,   have had a profound impact on various areas of analysis and geometry and are 
 usually referred to as Lojasiewicz-Simon inequalities.

  The cross-sections of the tangent cones at the singularities in these cases are   assumed to be smooth and compact and this is crucial.  This means that 
  nearby cross-sections can be written as graphs over 
  the cross-section and, thus, can be identified with functions on the cross-section of the cone.   The problem is then to prove a Lojasiewicz-Simon inequality for an analytic functional on a Banach space of functions, where $0$ is a critical point corresponding to the cross-section.

 

Uniqueness of tangents has important  applications to regularity of the singular set; see Section \ref{s:singular} and
cf., e.g., \cite{Si3}, \cite{Si4},  \cite{Si5}, \cite{BrCoL} and \cite{HrLi} and cf. Figure \ref{f:3}.

   \begin{figure}[htbp]		
\centering\includegraphics[totalheight=.4\textheight, width=.5\textwidth]{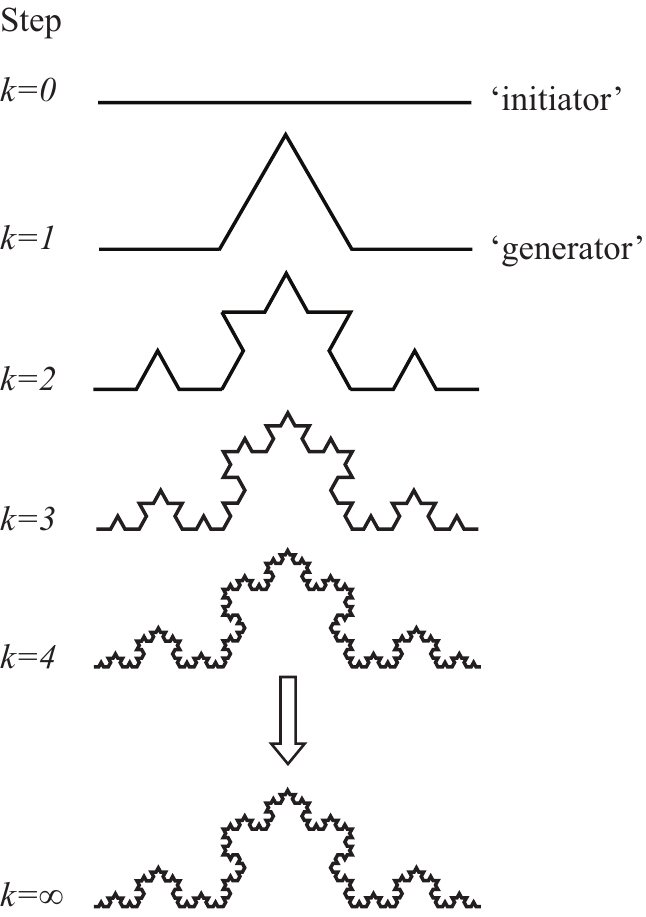}
\caption{The Koch curve is  close to a line on all scales, yet the line that it is close to changes from scale to scale.  It is not rectifiable but admits a H\"older parametrization.  It also illustrates that uniqueness of blowups is closely related to rectifiability.}   \label{f:3}
  \end{figure}

\section{Uniqueness of blowups for mean curvature flow}

In the next few sections, we will explain why at each generic singularity of a mean curvature flow the blowup is unique; that is independent of the sequence of rescalings; see Figure \ref{f:2}.  This very recent result settled a major open problem that was open even in the case of mean convex hypersurfaces where it was known that all singularities are generic.  Moreover, it is the first general uniqueness theorem for blowups to a Geometric-PDE at a non-compact singularity.  
 

   \begin{figure}[htbp]		
\centering\includegraphics[totalheight=.4\textheight, width=1\textwidth]{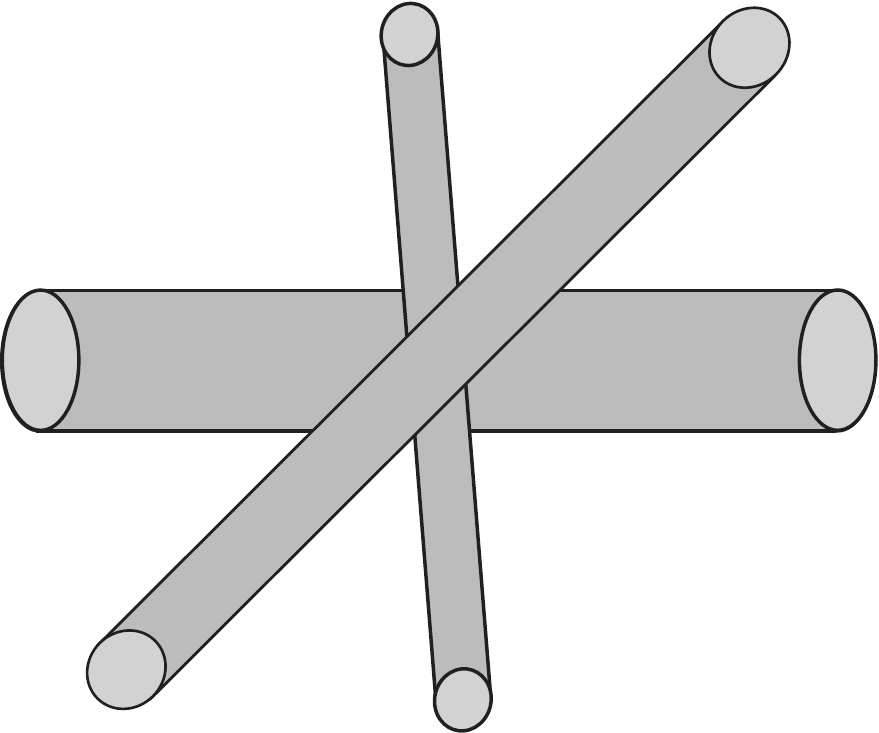}
\caption{The essence of uniqueness of tangent flows:  Can the flow be close to a cylinder at all times right before the singular time, yet the axis of the cylinder changes as the time gets closer to the singular time?}   \label{f:2}
  \end{figure}
 
 As already mentioned uniqueness of blowups is perhaps the most fundamental question that one can ask about singularities and  is known
 to imply regularity of the singular set.

 The proof of this uniqueness result
 relies on two completely new infinite dimensional Lojasiewicz type inequalities that, unlike all
 other infinite dimensional Lojasiewicz inequalities we know of, do not
 follow from   reduction to the classical finite-dimensional
 Lojasiewicz inequalities, but rather are proven directly and do not
 rely on Lojasiewicz's arguments or results.  
 
 It is well-known that to deal with non-compact singularities requires entirely new ideas and techniques as one cannot argue as in  Simon's work,
 and all the later work that  uses his ideas.  Partly because of this, it is expected that the techniques and ideas described here have applications to other flows.

\vskip2mm
The rest of this paper focuses on 
    mean curvature flow  (or MCF) of hypersurfaces.  This is  
a non-linear parabolic evolution equation where a hypersurface
evolves over time by locally moving in the direction of steepest
descent for the volume element.  It has been used and studied in material science for almost a century. 
Unlike some of the other earlier papers in material science, both von Neumann's 1952 paper and Mullins 1956 paper had explicit  equations.  In his paper von Neumann discussed soap foams whose interface tend to have constant mean curvature whereas Mullins is describing coarsening in metals, in which interfaces are not generally of constant mean curvature.  Partly as a consequence, Mullins may have been the first  to write down the MCF equation in general.  Mullins also found some of the basic self-similar solutions like the translating solution now known as the Grim Reaper.  
To be precise, suppose that
 $M_t\subset \RR^{n+1}$ is a one-parameter family of smooth hypersurfaces, then we say that $M_t$ flows by the  MCF if
\begin{equation}
x_t= -H\,\nn\, , 
\end{equation}
where $H$ and $\nn$ are   the mean curvature and unit normal, respectively, of 
$M_t$ at the point $x$.  
 
\subsection{Tangent flows}

By definition, a tangent flow is the limit of a sequence of rescalings
at a singularity, where the convergence is uniform on compact subsets.{\footnote{This is analogous to a tangent cone
at a singularity of a minimal variety, cf. \cite{FFl}.}}
For instance, a tangent flow to $M_t$ at the origin in space-time is the limit of 
a sequence of rescaled flows $\frac{1}{\delta_i} \, M_{\delta_i^2 \, t}$ where $\delta_i \to 0$.
A priori, different sequences $\delta_i$ could give different tangent flows
and the question of the uniqueness of the blowup - independent of the sequence - is a major
question in many geometric problems.
By a monotonicity formula of Huisken, \cite{H1}, and an argument of Ilmanen and
White, \cite{I},  \cite{W3}, tangent flows are   shrinkers, i.e., self-similar solutions of 
MCF that evolve by rescaling.
The only generic shrinkers are round cylinders by \cite{CM1}.

\vskip2mm 
We will say that a singular point is {\emph{cylindrical}} if at least one tangent flow is a multiplicity one cylinder $\SS^k \times \RR^{n-k}$.  
The main application of the new Lojasiewicz type inequality of \cite{CM2} is the 
following theorem that shows that  tangent flows at generic singularities are unique:

\begin{Thm} \label{t:main} 
\cite{CM2} 
Let $M_t$ be a  MCF in $\RR^{n+1}$.
  At each cylindrical singular point the tangent flow is unique.
  That is, any other tangent flow is also a cylinder with
  the same $\RR^k$ factor that points in the same direction.
\end{Thm}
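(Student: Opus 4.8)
The plan is to follow the infinite-dimensional analogue of the strategy sketched in Sections~\ref{ss:1to2}--0.4, adapted to the setting of rescaled MCF. First I would recall that, after the rescaling (Huisken's monotonicity), the problem becomes a gradient flow: the rescaled surfaces $\Sigma_s = \frac{1}{\sqrt{-t}} M_t$ (with $s = -\log(-t)$) evolve by the negative gradient flow of the Gaussian area functional $F(\Sigma) = (4\pi)^{-n/2}\int_\Sigma e^{-|x|^2/4}$, whose critical points are exactly the shrinkers. A tangent flow being the cylinder $\cC = \SS^k\times\RR^{n-k}$ means that some sequence $\Sigma_{s_i}$ converges to $\cC$; the goal is to upgrade this to convergence as $s\to\infty$, with no change in the axis $\RR^k$. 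By Brakke-type regularity, once $\Sigma_s$ is sufficiently close to $\cC$ on a large ball it is a graph there of a small function $u$ over $\cC$, so the flow is (locally) a nonlinear parabolic equation for $u$, and $F$ becomes an analytic functional $F(u)$.

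The core of the argument is a \emph{noncompact} \L{}ojasiewicz--Simon inequality of the form $|F(\Sigma) - F(\cC)|^{1-\beta} \le C\,\|\nabla F\|$ (equivalently, controlling $F(\Sigma)-F(\cC)$ by the $L^2$-norm of the shrinker-mean-curvature $H + \frac{\langle x,\nn\rangle}{2}$), valid whenever $\Sigma$ lies in a small neighborhood of $\cC$. Crucially, as emphasized in the excerpt, this cannot be obtained by Lyapunov--Schmidt reduction: the cross-section $\SS^k\times\RR^{n-k}$ is noncompact, the relevant drift operator has continuous/infinite-dimensional kernel coming from the $\RR^{n-k}$ directions (translations along the axis, rotations of the axis), and Simon's finite-dimensional reduction simply does not apply. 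Instead I would prove the inequality directly: split the graph function $u$, using the linearized operator $L = \Delta - \frac{x}{2}\cdot\nabla + \frac12 + \tfrac{1}{2}$ (the Jacobi operator of the shrinker), into the part where $L$ is uniformly invertible (the ``easy'' directions, handled by the elementary quadratic argument as in Section~\ref{ss:1to2}) and the dangerous low-eigenvalue part; on the latter one exploits the rigidity of the cylinder together with sharp weighted estimates and the polynomial structure of the quantities involved, rather than any real-analytic finite-dimensional \L{}ojasiewicz inequality. Establishing this \L{}ojasiewicz inequality in the noncompact setting is, without question, the main obstacle — it is precisely the content of the new inequality of \cite{CM2}.

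With the \L{}ojasiewicz inequality in hand, the uniqueness argument proceeds exactly as in Section~0.4: setting $f(s) = F(\Sigma_s) - F(\cC) \ge 0$, one has $f' = -\|\nabla F\|^2$, and the \L{}ojasiewicz inequality gives $f' \le -f^{2\beta}$ for a suitable $\beta\in(1/2,1)$ as long as $\Sigma_s$ stays near $\cC$; integrating yields polynomial decay $f(s)\le C\,s^{-1/(2\beta-1)}$, and then the Cauchy--Schwarz plus integration-by-parts trick shows $\int_1^\infty \|\nabla F\| \,ds = \int_1^\infty \sqrt{-f'}\,ds < \infty$. Since $\int \|\nabla F\|\,ds$ dominates (a weighted version of) the distance traveled by $\Sigma_s$, the rescaled flow converges to a definite limit shrinker; by \cite{CM1} and the hypothesis this limit is a cylinder, and the finiteness of the total length forbids the axis from drifting, so it must be $\cC$ itself — every other tangent flow is the same cylinder with the same $\RR^k$ factor pointing in the same direction.

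A technical point I would need to address, which is standard in this circle of ideas but not entirely routine, is the \emph{improved graphical estimate}: the \L{}ojasiewicz inequality only applies while $\Sigma_s$ remains in a fixed small neighborhood of $\cC$, so one must run a continuity/bootstrap argument showing that the smallness of the total length $\int \|\nabla F\|\,ds$ that would follow \emph{if} $\Sigma_s$ stayed close is in fact enough to \emph{keep} it close, using parabolic interpolation to pass from $L^2$-closeness to the $C^{k}$-closeness needed for the graph representation. Handling the noncompactness here (the neighborhood is noncompact, so one needs weighted norms and control at spatial infinity) is the secondary difficulty, and it is intertwined with the proof of the \L{}ojasiewicz inequality itself.
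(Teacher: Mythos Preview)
Your overall framework is right: pass to the rescaled flow, view it as the negative gradient flow of $F$, prove a \L{}ojasiewicz-type inequality near the cylinder, and then run the length-of-gradient-flow argument of Section~0.3. Where your proposal departs from the paper, and where I think there is a real gap, is in the mechanism you propose for the \L{}ojasiewicz inequality itself.

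You propose to write $\Sigma$ as a graph $u$ over the cylinder and then split $u$ spectrally via the linearized operator $L$, handling the invertible directions by the quadratic argument and the low modes by ``rigidity plus weighted estimates.'' But this is, in effect, still a Lyapunov--Schmidt-style decomposition, and it runs into exactly the obstruction the paper emphasizes: $\Sigma_s$ is only a graph over a {\emph{compact}} ball $B_R$ in the cylinder, with $R$ not known in advance, so there is no global spectral decomposition of $u$ to perform. You flag this as a ``secondary difficulty''; in fact it is the primary one, and your sketch gives no concrete way around it. The paper's route is quite different and avoids writing $\Sigma$ as a graph until the very end. It first proves the {\emph{first}} \L{}ojasiewicz inequality (Theorem~\ref{t:ourfirstloja}), bounding the distance $d_{\cC}(R)^2$ to the space of cylinders by $\|\phi\|_{L^1(B_R)}$ plus an explicit exponential error $\e^{-cR^2}$, and does so by working intrinsically on $\Sigma$: a Simons-type identity (Proposition~\ref{l:gensimons}) gives $\cL_{H^2}(A/H)$ in terms of $\phi$, which yields weighted $L^2$ and then pointwise bounds on $\nabla(A/H)$ (Proposition~\ref{p:effective}, Corollaries~\ref{c:effective} and \ref{c:ptwise}); the near-parallelism of $A/H$ then forces $\Sigma$ to be close to a cylinder on $B_R$ (Proposition~\ref{p:cylclose}, using the eigenvalue-splitting Lemma~\ref{c:lowev} from \cite{CIM}). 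Only after this does the paper invoke the argument of Subsection~\ref{ss:1to2} --- first \L{}ojasiewicz implies second --- to obtain the gradient inequality (Theorem~\ref{t:ourgradloja}), again with explicit $\e^{-cR^2}$ error terms. Along the rescaled flow one then shows the graphical radius $R=\graph(\Sigma_s)$ grows fast enough that these error terms are dominated by $\|\phi\|_{L^2}$, yielding the effective inequalities \eqr{e:07}--\eqr{e:08}.

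In short: you have the right endgame, but the key new idea you are missing is to replace the spectral/graphical analysis of $u$ by geometric control of the tensor $A/H$ on $\Sigma$ itself, prove the {\emph{first}} \L{}ojasiewicz inequality that way (with quantified cutoff errors), and then derive the gradient inequality from it via the mechanism of Subsection~\ref{ss:1to2}.
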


This theorem solves a major open problem; see, e.g., page $534$ of \cite{W2}.  Even in the case of the evolution
of mean convex hypersurfaces where all singularities are cylindrical, uniqueness of the axis was unknown; see
\cite{HS1}, \cite{HS2}, \cite{W1}, \cite{W4}, \cite{SS}, \cite{An} and \cite{HaK}.\footnote{The results of \cite{CM2} not only give uniqueness of tangent flows but also a definite rate where the rescaled MCF converges to the relevant cylinder.  The distance to the cylinder is decaying to zero at a definite rate over balls whose radii are increasing at a definite rate to infinity.}

In recent joint work with Tom Ilmanen, \cite{CIM}, we showed
that if one tangent flow at a singular point of a MCF is
  a multiplicity one cylinder, then all are.  
  However, \cite{CIM} left open the possibility that the direction of the axis (the $\RR^k$ factor) 
  depended on the sequence of rescalings.  The proof of Theorem \ref{t:main} and, in particular, 
  the first Lojasiewicz type inequality of \cite{CM2}, 
  has its roots in some ideas and inequalities from \cite{CIM} and in fact implicitly use that cylinders are isolated among shrinkers by \cite{CIM}.
  
The  results of \cite{CM2} are the first general uniqueness theorems for tangent
flows to a geometric flow at a non-compact singularity.  (In fact, not only are the singularities that \cite{CM2} deal with non-compact but they are also non-integrable.)  Some special
cases of uniqueness of tangent flows for MCF were previously analyzed assuming either some sort of
convexity or that the hypersurface is a surface of rotation; see
\cite{H1}, \cite{H2}, \cite{HS1}, \cite{HS2}, \cite{W1}, \cite{SS}, \cite{AAG}, 
section $3.2$ in the book \cite{GGS}, and \cite{GK}, \cite{GKS}, \cite{GS}.  In contrast, uniqueness for
blowups at compact singularities is better understood; cf.\ \cite{AA},
  \cite{Si1}, \cite{H3}, \cite{Sc}, \cite{KSy}, and \cite{Se}.

One of the significant difficulties that \cite{CM2} overcomes, and sets it apart from all other work we know of, is that the singularities are noncompact.  This causes major analytical difficulties and to address them requires entirely new techniques and ideas. This is not so much because
of the  subtleties of analysis on noncompact domains, though this is an issue, but crucially
 because  the evolving hypersurface cannot be written as an entire graph over the singularity no matter how close we get to the singularity.
Rather,
the geometry of the situation dictates that  only part of the evolving hypersurface can be written as a graph 
over a compact piece of the singularity.{\footnote{In the end, what comes out of the analysis in \cite{CM2} is that the domain the evolving hypersurface is a graph over is expanding in time and at a definite rate, but this is not all all clear from the outset; see also footnote $3$.}}
 
\section{Lojasiewicz inequalities for non-compact hypersurfaces and MCF} 
The  infinite dimensional Lojasiewicz-type inequalities that \cite{CM2} showed are for the $F$-functional on the space of hypersurfaces.

The $F$-functional is given by integrating the Gaussian over a hypersurface $\Sigma\subset \RR^{n+1}$.  This is also often referred to as the Gaussian surface
area and is defined by
\begin{align}
  F (\Sigma) = (4\pi)^{-n/2} \, \int_{\Sigma} \,
  \e^{-\frac{|x|^2}{4}} \, d\mu \, .
\end{align}
The entropy $\lambda (\Sigma) $ is the supremum of the Gaussian surface areas
 over all centers and scales
 \begin{align}
  \lambda (\Sigma) = \sup_{t_0 > 0 , \, x_0 \in \RR^n  } \, \,   (4\pi \, t_0)^{-n/2} \, \int_{\Sigma} \,
  \e^{-\frac{|x-x_0|^2}{4t_0}} \, d\mu \, .
\end{align}
The entropy is a Lyapunov functional for both MCF and rescaled MCF (it is monotone non-increasing under the flows).

It follows from the first variation formula that the gradient of $F$ is 
\begin{align}	\label{e:gradF}
   \nabla_{\Sigma} F (\psi) = \int_{\Sigma} \, \left( H - \frac{\langle x , \nn \rangle}{2} \right)
   \, \psi \, \e^{ - \frac{|x|^2}{4} } \, .
\end{align}
Thus, the critical points of $F$ are shrinkers, i.e., hypersurfaces with $H = \frac{\langle x , \nn \rangle}{2} $.   
The most 
important shrinkers are the  generalized cylinders $\cC$; these are the generic ones by \cite{CM1}.
The space $\cC$ is the union of $\cC_k$ for $k \geq 1$, where 
 $\cC_{k}$ is the space of cylinders $\SS^k \times \RR^{n-k}$,
 where the $\SS^k$ is centered at $0$ and has radius $\sqrt{2k}$ and we allow all 
 possible rotations by $SO(n+1)$.

 \vskip2mm
A family of hypersurfaces $\Sigma_s$ evolves by the negative gradient flow for the $F$-functional 
if it satisfies  the equation 
\begin{align}
	(\partial_s x)^\perp= -H\,\nn +
x^\perp/2 \, .
\end{align}
 This flow is called the rescaled MCF since $\Sigma_s$ is 
  obtained   
from a MCF $M_t$ by setting $\Sigma_s=\frac{1}{\sqrt{-t}}M_t$,
$s=-\log(-t)$, $t<0$.  By \eqr{e:gradF},
critical points for the $F$-functional or, equivalently, stationary points for the rescaled 
MCF, are the shrinkers for the MCF that become extinct at the origin in space-time.    
A rescaled MCF has a unique asymptotic limit if and only if
the corresponding MCF has a unique tangent flow at that singularity.

\vskip2mm
The paper \cite{CM2} proved versions of the two Lojasiewicz inequalities for the $F$-functional on a general hypersurface $\Sigma$.  Roughly speaking, 
\cite{CM2} showed that
\begin{align}
   \dist (\Sigma , \cC)^2 &\leq C \, \left|\nabla_{\Sigma} F \right|\, ,   \label{e:07}  \\
    (F(\Sigma)-F(\cC))^{\frac{2}{3}} &\leq C \,\left|\nabla_{\Sigma} F \right| \, .   \label{e:08}
\end{align}
Equation \eqr{e:07} corresponds to Lojasiewicz's first inequality for $\nabla F$ whereas \eqr{e:08} corresponds to his second inequality for $F$.  The precise statements of these inequalities are much more complicated than this, but they are of the same flavor.   

\vskip2mm
As noted earlier a consequence of the classical Lojasiewicz gradient inequality for an analytical function on Euclidean space is that near a critical point there is no other critical values.  This consequence of a Lojasiewicz gradient inequality for the $F$-functional near a round cylinder (and in fact the corresponding consequence of \eqr{e:07}) was established in earlier joint work with Tom Ilmanen (see \cite{CIM} for the precise statement):

\begin{Thm}
\cite{CIM}   Any shrinker that is sufficiently  
close to a round cylinder on a large, but compact, set must itself be a round cylinder.
\end{Thm}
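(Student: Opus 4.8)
The plan is to reduce the claim to the classification in \cite{CM1} of self-shrinkers with $H\geq 0$ — the only smooth complete embedded such shrinkers of polynomial volume growth are the generalized cylinders in $\cC$, the round sphere, and the hyperplane — so that it suffices to show that a shrinker $\Sigma$ which is $\varepsilon$-close to the cylinder $\cC_k=\SS^k\times\RR^{n-k}$ on a large ball $B_R$ must have $H>0$ \emph{everywhere}. The whole difficulty is then to propagate positivity of $H$ from the compact set, where the hypothesis forces it, to all of $\Sigma$.

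First I would record that on $\cC_k$ one has $H\equiv\sqrt{k/2}>0$ and $|A|^{2}\equiv\tfrac12$, so that if $\Sigma$ is $\varepsilon$-close to $\cC_k$ in $C^{2}$ on $B_R$ — which one gets from $C^{0}$-closeness by interior Schauder estimates for the quasilinear shrinker equation — then $H>\tfrac12\sqrt{k/2}>0$ on $\Sigma\cap B_{R-1}$, a nonempty open subset of $\Sigma$. Next I would use the identity $LH=H$, valid on every shrinker, where $L=\Delta_\Sigma-\tfrac12\langle x,\nabla_\Sigma\cdot\rangle+|A|^{2}+\tfrac12$ is the linearized operator for $F$. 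On $\cC_k$ the operator $L$ separates variables — spherical harmonics on $\SS^k$ times Hermite functions on $\RR^{n-k}$ — and its spectrum in the weighted space $L^{2}(e^{-|x|^{2}/4})$ is discrete, with largest eigenvalue $1$, which is simple and spanned by the constant $H_{\cC_k}$, and with a gap of size $\tfrac12$ below it. Since $\Sigma$ is $\varepsilon$-close to $\cC_k$ on $B_R$, the operator $L$ on $\Sigma$ is a small perturbation of the operator on $\cC_k$ in the weighted sense — this is where a priori estimates for shrinkers enter, see below — so $L$ on $\Sigma$ has exactly one eigenvalue in $(\tfrac58,\infty)$, it is simple and close to $1$, and by the Perron--Frobenius principle (the strong maximum principle for Schr\"odinger-type drift operators) its eigenfunction has a fixed sign. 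But $H\in L^{2}(e^{-|x|^{2}/4})$ by polynomial volume growth and $LH=H$, so $1$ is an eigenvalue of $L$ on $\Sigma$; being the unique eigenvalue above $\tfrac58$ it is the top one, so $H$ spans this one-dimensional eigenspace and hence never changes sign. As $H>0$ somewhere, $H>0$ on all of $\Sigma$, and then \cite{CM1} forces $\Sigma$ to be a generalized cylinder, a round sphere, or a hyperplane; closeness to $\cC_k$ on $B_R$ (for $R$ larger than a fixed multiple of $\sqrt{2k}$) excludes the compact sphere and the flat hyperplane, so $\Sigma\in\cC$, i.e. $\Sigma$ is a round cylinder.

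The step I expect to be the main obstacle is exactly the perturbation claim: the hypothesis controls $\Sigma$ only on $B_R$, so one must bring in a priori structure of shrinkers — polynomial volume growth, bounds on the entropy, and weighted curvature estimates — to be sure that the top of the spectrum of $L$ on $\Sigma$ is genuinely governed by that of $L$ on $\cC_k$ and that the relevant eigenvalue remains simple and gapped. A more hands-on alternative would be to write $\Sigma$ near $\cC_k$ as a normal graph of a small function $u$ solving $Lu=\mathcal{Q}(u)$ with $\mathcal{Q}$ quadratically small and then force $u\equiv 0$ by inverting $L$ mode by mode; but this route is considerably more delicate because $\cC_k$ is a non-integrable critical point of $F$ — the kernel of $L$ on $\cC_k$ is strictly larger than the space of axis-tilting rotations — so the kernel cannot be gauged away by a rotation alone. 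This is the phenomenon that makes the Lojasiewicz-type inequalities \eqr{e:07}--\eqr{e:08} of \cite{CM2} genuinely hard and not reducible to the classical finite-dimensional arguments.
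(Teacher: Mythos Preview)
Your proposal takes a genuinely different route from the proof in \cite{CIM}, and the step you yourself flag as the main obstacle is in fact a real gap that cannot be filled along the lines you suggest. The hypothesis controls $\Sigma$ only on $B_R$; outside $B_R$ there is no a priori curvature bound for an arbitrary shrinker, so the potential $|A|^2$ in $L$ is uncontrolled there and the operator $L$ on $\Sigma$ is \emph{not} a small perturbation of $L$ on $\cC_k$ in any global weighted sense. Polynomial volume growth and entropy bounds do not yield pointwise curvature control off $B_R$, so you have no way to assert that the top of the spectrum of $L$ on $\Sigma$ is simple, gapped, or even near that of the cylinder. The spectral--Perron--Frobenius step is therefore circular: it presupposes exactly the global control that the theorem is meant to produce.

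The argument in \cite{CIM}, as outlined in Sections \ref{s:gensi}--\ref{s:dcyl} here (specialized to $\phi\equiv 0$), avoids spectral theory of $L$ entirely. It is a local-to-global \emph{extension} argument based on the tensor $\tau=A/H$. On a shrinker with $H>0$ one has $\cL_{H^2}\tau=0$ (Proposition \ref{p:eqnsg} with $\phi=0$), and the Bochner-type identity of Proposition \ref{p:effective} gives exponentially decaying Gaussian $L^2$ bounds on $\nabla\tau$ on $B_{R-s}$; interpolation (Corollary \ref{c:ptwise}) upgrades these to pointwise smallness of $|\nabla\tau|+|\nabla^2\tau|$. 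Lemma \ref{c:lowev} then shows that at every point the \emph{pointwise} eigenvalues of $\tau$ (not the spectral eigenvalues of $L$) split into a group near zero and a group near a single nonzero value, so the principal directions separate into flat and umbilic ones. Proposition \ref{p:cylclose} (in the $\phi=0$ case this is proposition 2.2 of \cite{CIM}) converts this into the statement that $\Sigma$ is a $C^{1}$-small graph over a cylinder on a ball strictly larger than the one you started with. Iterating pushes the graphical region out to infinity, and one concludes $\Sigma\in\cC$. At no point does one need global spectral information, nor does one invoke the $H\geq 0$ classification of \cite{CM1}; positivity of $H$ is propagated along with the graphical structure by the extension step itself.

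Your alternative (b), writing $\Sigma$ as a normal graph and inverting $L$ mode by mode, is closer in spirit to what is actually done, and your remark about non-integrability is exactly right: the kernel of $L$ on $\cC_k$ contains directions not coming from rotations, which is why one cannot simply gauge-fix and invert. The way \cite{CIM} gets around this is precisely by working with $\tau=A/H$ rather than with the graph function $u$ directly --- $\tau$ is a geometric invariant that is \emph{constant} on every cylinder regardless of axis, so the non-integrable kernel directions are invisible to it.
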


\vskip2mm
 In \cite{CM2} an infinite
 dimensional analog of the first Lojasiewicz inequality is proven directly and used   
 together with an infinite dimensional analog of the argument in Subsection \ref{ss:1to2} to show an analog of the 
  second Lojasiewicz inequality.   As mentioned, the reason why one cannot argue as in Simon's work, and all the later work that makes use his ideas, comes from that the singularities are noncompact.

\subsection{The two Lojasiewicz inequalities}  We will now state the two Lojasiewicz-type inequalities
for the $F$-functional on the space of hypersurfaces.

\vskip2mm
Suppose that $\Sigma\subset \RR^{n+1}$ is a hypersurface and fix
some sufficiently small $\epsilon_0 > 0$.   Given a large integer $\ell$ and a large constant $C_{\ell}$, we let $\graph (\Sigma)$ be the maximal radius so that
\begin{itemize}
 \item  $B_{\graph (\Sigma)} \cap \Sigma$ is the graph over a cylinder in
 $\cC_k$
 of a function $u$  with $\| u \|_{C^{2,\alpha}} \leq \epsilon_0$ and $|\nabla^{\ell} A| \leq C_{\ell}$.
\end{itemize}

In the next theorem,  we will use a Gaussian $L^2$ distance 
$d_{\cC}(R)$ to the space $\cC_k$ in the ball of radius $R$.    To define this, given $\Sigma_k  \in \cC_k$,  let $w_{\Sigma_k}: \RR^{n+1}\to \RR$ 
denote the distance to the axis of $\Sigma_k$ (i.e., to the space of translations that leave $\Sigma_k$ invariant).  Then we define
 \begin{align}
   d_{\cC}^2(R) = \inf_{\Sigma_k \in \cC_k} \,  \| w_{\Sigma_k} - \sqrt{2k} \|_{L^2(B_R)}^2 \equiv \inf_{  \Sigma_k \in \cC_k} \, 
      \int_{B_{R} \cap \Sigma_k} ( w_{\Sigma_k} - \sqrt{2k})^2\, \e^{-\frac{|x|^2}{4}}\, .
\end{align}
The Gaussian $L^p$ norm on the ball $B_R$ is
$\| u \|^p_{L^p(B_R)} = \int_{B_R} |u|^p \,  \e^{ - \frac{|x|^2}{4} }$.

Given a general hypersurface $\Sigma$, it is also convenient to define the function $\phi$ by
\begin{align}  
 \phi = \frac{\langle x , \nn \rangle}{2}-H \, ,
\end{align}
so that $\phi$ is minus the gradient of the functional $F$.

\vskip2mm
The main tools that \cite{CM2} developed are the
following two analogs for non-compact hypersurfaces of Lojasiewicz's  inequalities.    The first of these inequalities is really for the gradient whereas the second is for the function.

\begin{Thm}  \label{t:ourfirstloja}
(A Lojasiewicz inequality for non-compact hypersurfaces, \cite{CM2}). 
If $\Sigma \subset \RR^{n+1}$ is a hypersurface
with $\lambda (\Sigma) \leq \lambda_0 $  and $R \in [ 1 ,  \graph (\Sigma) - 1]$, then 
\begin{align}	\label{e:filoja}
	d^2_{\cC} (R) \leq     C \,  R^{\rho}\, \left\{ 
        \|  \phi \|_{L^1(B_{ R} )}^{ b_{\ell,n}  }   
       +  \e^{ - \frac{ b_{\ell , n} \, R^2}{4}   } \right\}  \, ,
\end{align}
where $C = C(n,\ell, C_{\ell}, \lambda_0)$,  $\rho = \rho (n)$ and  $b_{\ell , n} \in (0,1)$ satisfies $\lim_{\ell \to \infty} \, b_{\ell , n} = 1$.
\end{Thm}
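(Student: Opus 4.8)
The plan is to pass to the region where $\Sigma$ is graphical, linearize the gradient $\phi$ about a cylinder, and then estimate the solution of the resulting almost linear equation by separating the directions in which the Jacobi operator of the cylinder is invertible from the finitely many in which it is not.

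First, since $R \le \graph(\Sigma)-1$, the set $B_{R+1}\cap\Sigma$ is the normal graph of a function $u$ over some cylinder $\Sigma_k\in\cC_k$ with $\|u\|_{C^{2,\alpha}}\le\epsilon_0$ and $|\nabla^{\ell}A|\le C_{\ell}$. Writing the shrinker defect $\phi=\tfrac{\langle x,\nn\rangle}{2}-H$, which is minus the gradient of $F$, in these coordinates gives
\begin{align*}
\phi \;=\; \mathcal{L}_{\Sigma_k} u + \mathcal{Q}(u), \qquad \mathcal{L}_{\Sigma_k} \;=\; \Delta_{\Sigma_k} - \tfrac12\,\langle x^{\top},\nabla\rangle + |A_{\Sigma_k}|^2 + \tfrac12 ,
\end{align*}
where $\mathcal{L}_{\Sigma_k}$ is the Hessian of $F$ at $\Sigma_k$ and $\mathcal{Q}$ is the quadratic remainder, $|\mathcal{Q}|\le C(|u|+|\nabla u|+|\nabla^2u|)^2\le C\epsilon_0(|u|+|\nabla u|+|\nabla^2u|)$. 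Since $B_R\cap\Sigma$ lies in the graphical region, where $w_{\Sigma_k}-\sqrt{2k}$ restricted to $\Sigma$ agrees with $u$ to leading order, $d_{\cC}^2(R)$ is comparable, up to the Jacobian of the change of variables, to $\inf_{\Sigma_k}\|u_{\Sigma_k}\|_{L^2(B_R)}^2$; so it suffices to bound $\|u\|_{L^2(B_R)}$ for a suitably chosen $\Sigma_k$ by the right hand side of \eqr{e:filoja}.

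With respect to the Gaussian weight the operator $\mathcal{L}_{\Sigma_k}$ has discrete spectrum (the $\SS^k$ factor contributes the spherical Laplacian and the $\RR^{n-k}$ factor the Ornstein--Uhlenbeck operator). Its kernel is finite dimensional and splits into the \emph{integrable} rotation modes, which are precisely $T_{\Sigma_k}\cC_k$ (products of a linear function on $\SS^k$ and a linear function on $\RR^{n-k}$), together with a second finite dimensional family of \emph{non-integrable} Jacobi fields (a degree two Hermite polynomial on $\RR^{n-k}$ times a constant on $\SS^k$); this second family is not tangent to any curve of shrinkers and is the hallmark of the noncompact problem, which is why a Lyapunov--Schmidt reduction of Simon's type is unavailable here. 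Apart from the kernel there are only finitely many positive eigenvalues (dilation, translations, translation along the axis) and a uniform spectral gap about $0$ on the rest. Decomposing $u=u_++u_0+u_-$ accordingly, $\mathcal{L}_{\Sigma_k}$ is boundedly invertible on $u_\pm$, so from $\mathcal{L}_{\Sigma_k}u=\phi-\mathcal{Q}$ one gets $\|u_\pm\|_{L^2(B_R)}\le C\|\phi\|_{L^2(B_R)}+C\epsilon_0\|u\|_{L^2(B_R)}+C\,e^{-R^2/4}$; here the last term is the error from cutting $u$ off at a radius between $R$ and $\graph(\Sigma)$ in order to use the global spectral theory, and the $\epsilon_0$ term is absorbed once $\epsilon_0$ is small. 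Interpolating the available control $\|\phi\|_{L^1(B_R)}$ against the higher derivative bounds $|\nabla^{\ell}A|\le C_{\ell}$ (which bound $\phi$ in high $C^j$ norms by Schauder estimates) upgrades $\|\phi\|_{L^1}$ to $\|\phi\|_{L^2}$ at the cost of the exponent $b_{\ell,n}<1$ with $b_{\ell,n}\to1$ as $\ell\to\infty$, and likewise turns $e^{-R^2/4}$ into $e^{-b_{\ell,n}R^2/4}$; the change of variables and a covering over dyadic shells produce the polynomial factor $R^{\rho}$, $\rho=\rho(n)$.

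What remains is to control $u_0$, and this is where the real difficulty lies. Choosing $\Sigma_k$ to minimize the distance to $\Sigma$ kills the $T_{\Sigma_k}\cC_k$ component of $u_0$; what survives is the projection of $u$ onto the non-integrable Jacobi fields, where there is no spectral gap and where $\cC_k$ has strictly smaller dimension than $\ker\mathcal{L}_{\Sigma_k}$, so the naive Morse--Bott argument fails. My plan here is to combine the rigidity theorem of \cite{CIM} --- any shrinker sufficiently close to a round cylinder on a large compact set is a round cylinder, so that when $\phi\equiv0$ this component vanishes --- with the a priori bounds $\|u\|_{C^{2,\alpha}}\le\epsilon_0$ on the \emph{full} ball $B_{\graph(\Sigma)}$ and $\lambda(\Sigma)\le\lambda_0$, to show quantitatively that the non-integrable component is bounded by $\|\phi\|_{L^1(B_R)}^{b_{\ell,n}}+e^{-b_{\ell,n}R^2/4}$: being annihilated by $\mathcal{L}_{\Sigma_k}$, this component can only be fed by $\phi$ and by the nonlinear coupling $\mathcal{Q}$, while its growth along the noncompact axis is capped by the graphical bound on $B_{\graph(\Sigma)}$. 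The hard part will be making this precise --- propagating estimates inward from $B_{\graph(\Sigma)}$ to $B_R$ without losing the Gaussian weight, and tracking the exact dependence on $R$, by means of a quantitative pinning estimate for the drift operator in the spirit of a three-annulus or frequency argument --- and this is exactly the new input that does not reduce to the classical finite dimensional Lojasiewicz inequalities. Combining this with the bound on $u_\pm$ then yields $\|u\|_{L^2(B_R)}$, hence $d_{\cC}^2(R)$, controlled by the right hand side of \eqr{e:filoja}.
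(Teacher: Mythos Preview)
Your approach is fundamentally different from the paper's, and the difference is not cosmetic: the paper does \emph{not} linearize $\phi$ about the cylinder and does \emph{not} use the spectral decomposition of the Jacobi operator $\mathcal{L}_{\Sigma_k}$ at all. Instead, the argument in \cite{CM2} is nonlinear and geometric. One first derives a Simons-type identity $L\,A = A + \Hess_{\phi} + \phi\,A^2$ on a general hypersurface (Proposition~\ref{l:gensimons}), and from it an equation for the tensor $\tau = A/H$ via the quotient rule (Proposition~\ref{p:eqnsg}). Integrating against a cutoff yields a Gaussian $L^2$ bound on $\nabla\tau$ in terms of $\|\phi\|_{L^1}$ plus a Gaussian tail (Proposition~\ref{p:effective}, Corollary~\ref{c:effective}); interpolation against the $|\nabla^{\ell}A|\le C_{\ell}$ bound then gives pointwise control of $\nabla\tau$ and $\nabla^2\tau$ with the exponent $d_{\ell,n}\to 1$ (Corollary~\ref{c:ptwise}). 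Finally, an eigenvalue-splitting lemma for $\tau$ (Lemma~\ref{c:lowev}) feeds into Proposition~\ref{p:cylclose}, which says that once $\tau$ is almost parallel and $\phi$ is small, $\Sigma$ is a graph over a cylinder with $|u|+|\nabla u|$ quantitatively controlled. The $d_{\cC}^2(R)$ bound follows. At no point is $u$ decomposed according to the spectrum of $\mathcal{L}_{\Sigma_k}$.

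Your proposal, by contrast, is precisely the Simon-style linearization that the paper repeatedly stresses is unavailable here, and the place where it stalls is exactly the place you flag yourself: the projection of $u$ onto the non-integrable part of $\ker\mathcal{L}_{\Sigma_k}$. For the $u_\pm$ pieces your outline is reasonable, but for the non-integrable Jacobi fields you offer only a plan --- invoke the qualitative rigidity of \cite{CIM}, then run ``a quantitative pinning estimate for the drift operator in the spirit of a three-annulus or frequency argument'' --- without an actual mechanism. This is not a detail: a quantitative version of \cite{CIM}'s rigidity, with the correct Gaussian weight and with error governed by $\|\phi\|_{L^1}$, is essentially the content of Theorem~\ref{t:ourfirstloja} itself, so appealing to it here is circular. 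Moreover, the non-integrable kernel elements (of the form $H$ times a degree-two Hermite polynomial on the axis) grow quadratically along $\RR^{n-k}$, so there is no three-annulus inequality of the usual type that pins their coefficient from data on $B_R$ alone; the graphical bound on $B_{\graph(\Sigma)}$ does cap them, but converting that cap into the specific right-hand side of \eqr{e:filoja} is exactly the step you leave open. The paper's route via $\tau=A/H$ sidesteps this entirely: it never isolates the kernel, and the ``non-integrability'' manifests only as the (sharp) loss of a square in Corollary~\ref{c:effective}, as noted in the remark following it.
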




\vskip2mm
The   theorem  bounds the $L^2$ distance to $\cC_k$ by a
power of  $\| \phi \|_{L^1}$, with an error term that comes from  
a cutoff argument since $\Sigma$ is non-compact and is not globally a 
graph of the cylinder.{\footnote{This is a Lojasiewicz inequality for 
the gradient of the $F$ functional ($\phi$ is the gradient of $F$).  This follows since, by \cite{CIM}, cylinders 
are isolated critical points for $F$ and, thus, $d_{\cC}$ 
locally measures the distance to the nearest critical point.}} 
This theorem is essentially sharp. 
Namely, the estimate \eqr{e:filoja} does not hold 
for any exponent $b_{\ell , n}$  
larger than one, but Theorem \ref{t:ourfirstloja} lets 
us take $b_{\ell , n}$ arbitrarily close to one.


\vskip2mm
In \cite{CM2} it is shown that the above inequality implies the following gradient type Lojasiewicz inequality.
  This inequality  bounds the difference of the $F$ functional near a critical point by two terms.  The first 
is essentially a power of $\nabla F$, while the second (exponentially decaying) term comes from
 that $\Sigma$ is not a graph over the entire cylinder.
 

 \begin{Thm}  \label{t:ourgradloja}
(A gradient Lojasiewicz inequality for non-compact hypersurfaces, \cite{CM2}).
If $\Sigma \subset \RR^{n+1}$ is a hypersurface
with $\lambda (\Sigma) \leq \lambda_0 $,  $\beta \in [0,1)$,  and $R \in [ 1 ,  \graph (\Sigma) - 1]$, then
\begin{align}	\label{e:fingloja} 
	 \left| F (\Sigma) - F (\cC_k) \right| 
      &\leq  C \,  R^{\rho}\, \left\{ 
        \|  \phi \|_{L^2(B_{ R} )}^{ c_{\ell,n} \, \frac{3+\beta}{2+2\beta} }   
       +  
         \e^{ - \frac{c_{\ell,n} \,(3+\beta) R^2}{8(1+\beta)}}    +  
	 \e^{ - \frac{(3+\beta)(R -1)^2}{16} }
       \,
        \right\}  \, ,
\end{align}
where $C = C(n,\ell, C_{\ell}, \lambda_0)$,  $\rho = \rho (n)$ and  $c_{\ell , n} \in (0,1)$ satisfies $\lim_{\ell \to \infty} \, c_{\ell , n} = 1$.
\end{Thm}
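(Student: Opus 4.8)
The plan is to run, for the $F$-functional, the infinite dimensional version of the finite dimensional argument of Subsection~\ref{ss:1to2}. There the ``function'' is $F(\Sigma)-F(\cC_k)$, the ``gradient'' is $\phi$, the ``distance to the critical set'' is $d_{\cC}$, and the role played in Subsection~\ref{ss:1to2} by the first Lojasiewicz inequality for $\nabla f$ --- which gave $\dist^{2}\le C\,|\nabla f|$ near an isolated critical point --- is now played by Theorem~\ref{t:ourfirstloja}. So, as in that subsection, one writes $\Sigma$ near a singular point as a graph of a function $u$ over a nearby cylinder, splits $u$ into a part along the directions in which the Jacobi operator of $F$ is nondegenerate and a part in its kernel, estimates the first part directly in terms of $\phi$, estimates the rest by a power of $d_{\cC}$, and then inserts Theorem~\ref{t:ourfirstloja}. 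The key new feature, and the reason Simon's method cannot be used, is that $\Sigma$ is never an entire graph over the cylinder; hence everything must be done at a finite scale $R$ with cutoffs, and this is what produces the exponential error terms.

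Concretely: choose $\Sigma_{k}\in\cC_k$ (nearly) realizing the infimum defining $d_{\cC}(R)$ and write $B_{R+1}\cap\Sigma$ as the graph of $u$ over $\Sigma_{k}$. Since this infimum is taken precisely over the choice of rotated axis, $u$ is essentially $L^{2}(B_R)$-orthogonal to the (linearly growing) rotational Jacobi fields of the cylinder, and $\|u\|_{L^{2}(B_{R})}$ is comparable to $d_{\cC}(R)$, while $\|u\|_{C^{2,\alpha}}\le\epsilon_{0}$ and $|\nabla^{\ell}A|\le C_{\ell}$ persist once one notes that two cylinders in $\cC_{k}$ whose axes are close on $B_{R}$ differ in angle by only $O(R^{-1})$. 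Now fix a cutoff $\chi$ equal to $1$ on $B_{R-1}$ and supported in $B_{R}$; analyticity of $F$ and the fact that $\Sigma_{k}$ is a critical point give the Taylor expansion $F(\Sigma)-F(\cC_{k})=\tfrac12\,Q(\chi u)+\cR+\cT$, where $Q$ is the quadratic form of the Jacobi operator $L$ of $F$ at $\Sigma_{k}$, the remainder $\cR$ is at least cubic in $u$ and its derivatives, and the tail error satisfies $|\cT|\le C\,e^{-c(R-1)^{2}}$ (because $\Sigma$ is graphical only on $B_{R+1}$ and $\lambda(\Sigma)\le\lambda_{0}$ bounds the Gaussian mass of $\Sigma$ outside $B_{R+1}$).

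Decompose $\chi u$ along the finite dimensional kernel $K$ of $L$ and its $L^{2}$-orthogonal complement. On $K^{\perp}$ the operator $L$ has a spectral gap, $Q$ vanishes on $K$, and $L(\chi u)=\chi\,\phi+(\text{terms quadratic in }u)+(\text{cutoff terms})$, so $|Q(\chi u)|$ and, more generally, every contribution carrying a factor from $K^{\perp}$ is bounded by $C\,\|\phi\|_{L^{2}(B_{R})}^{2}+C\,e^{-c(R-1)^{2}}$ plus terms of strictly higher order in $d_{\cC}(R)$ --- the ``higher order'' here, and the loss below, being obtained by interpolating the quadratic, cubic and higher terms using the $C^{2,\alpha}$ and $|\nabla^{\ell}A|\le C_{\ell}$ bounds (Sobolev embeddings in high dimensions are where $\ell$ enters). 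The surviving main term is the part of $\cR$ living in the kernel directions, where there is no linear control; to reach the stated family of exponents I would interpolate it as $\|u\|_{L^{3}(B_{R})}^{3}\le\|u\|_{L^{\infty}}^{1-s}\,\|u\|_{L^{2+s}(B_{R})}^{2+s}\le C\,\epsilon_{0}^{1-s}\,R^{\rho}\,d_{\cC}(R)^{2+s}$ with $s=\tfrac{1-\beta}{1+\beta}$, noting $\tfrac{2+s}{2}=\tfrac{3+\beta}{2+2\beta}$. Thus $|F(\Sigma)-F(\cC_{k})|\le C\,R^{\rho}\big(\|\phi\|_{L^{2}(B_{R})}^{2}+d_{\cC}(R)^{2+s}\big)+C\,e^{-c(R-1)^{2}}$, and substituting Theorem~\ref{t:ourfirstloja} together with the elementary $\|\phi\|_{L^{1}(B_{R})}\le C(n,\lambda_{0})\,\|\phi\|_{L^{2}(B_{R})}$ converts $d_{\cC}(R)^{2+s}=(d_{\cC}(R)^{2})^{(3+\beta)/(2+2\beta)}$ into a constant times $R^{\rho}\,\|\phi\|_{L^{2}(B_{R})}^{\,b_{\ell,n}(3+\beta)/(2+2\beta)}$ plus $R^{\rho}\,e^{-b_{\ell,n}(3+\beta)R^{2}/(8(1+\beta))}$; absorbing $\|\phi\|_{L^{2}(B_{R})}^{2}$ and combining the exponential scales gives exactly the three terms of \eqr{e:fingloja}, with $c_{\ell,n}$ equal to $b_{\ell,n}$ lowered slightly by the interpolation loss (hence $c_{\ell,n}\to1$ as $\ell\to\infty$).

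The step I expect to be the main obstacle is precisely the one forced by noncompactness: establishing the effective invertibility (spectral gap) of $L$ on $K^{\perp}$ at the finite scale $R$ with errors that decay like $e^{-cR^{2}}$, even though $K$ contains directions of polynomial growth and --- since the cylinders are non-integrable --- $K$ strictly contains the tangent space of the symmetry group of the cylinder, so that one cannot simply quotient by symmetries. This is exactly where Simon's Lyapunov--Schmidt reduction would enter in the compact case but is unavailable here, since that reduction needs $\Sigma$ to be a global graph over a compact cross-section. I also expect reconciling the two descriptions of $\Sigma$ near the cylinder, and carefully propagating the $R^{\rho}$ prefactors and the several exponential scales through the interpolations --- so as to land the precise constants $\tfrac{3+\beta}{2+2\beta}$ and the exponents in the exponentials --- to be technically demanding, though of a more routine nature than the spectral estimate.
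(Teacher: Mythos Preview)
Your proposal is correct and follows essentially the same route the paper indicates: the survey does not give a self-contained proof of Theorem~\ref{t:ourgradloja} but says explicitly that in \cite{CM2} ``the above inequality'' (Theorem~\ref{t:ourfirstloja}) ``implies the following gradient type Lojasiewicz inequality'' via ``an infinite dimensional analog of the argument in Subsection~\ref{ss:1to2}.'' Your outline --- write $\Sigma$ as a graph $u$ over a nearby cylinder, Taylor expand $F$ about the critical point, split $u$ along the kernel of $L$ and its complement, control the $K^{\perp}$ piece by $\|\phi\|_{L^2}$ and the remaining cubic kernel piece by a power of $d_{\cC}$, then feed in Theorem~\ref{t:ourfirstloja} --- is exactly this infinite dimensional Subsection~\ref{ss:1to2} argument, and you correctly identify the cutoff/noncompactness as the source of the exponential tails and the interpolation as the source of the $(3+\beta)/(2+2\beta)$ exponent.
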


When the theorem is applied, the parameters $\beta$ and $\ell$ is chosen to make the exponent  greater
than one on  the $\nabla F$ term, essentially giving that $\left| F(\Sigma) - F(\cC_k) \right|$
is bounded by a power greater than one of $|\nabla F|$.  A separate argument is needed to handle the 
exponentially decaying error terms.

\vskip2mm
The paper \cite{CM2} showed that when $\Sigma_t$ are  flowing by the rescaled MCF, then both terms on the right-hand side of
\eqr{e:fingloja} are bounded by a power greater than one of $\| \phi \|_{L^2}$ (the corresponding statement   holds for 
Theorem \ref{t:ourfirstloja}).  Thus, one essentially get the inequalities
\begin{align}
d_{\cC}^2 &\leq C \, \left|\nabla_{\Sigma_t} F \right|\, ,\\
  (F(\Sigma_t)-F(\cC))^{\frac{2}{3}} &\leq C \,\left|\nabla_{\Sigma_t} F \right| \, .
\end{align}
These two inequalities can be thought of as analogs for the rescaled MCF of Lojasiewicz inequalities; cf. \eqr{e:07} and
\eqr{e:08}.

  \section{Cylindrical estimates for a general hypersurface}	\label{s:gensi}
  
 The proof of the two Lojasiewicz inequalities relies on some equations and estimates on general hypersurfaces  $\Sigma \subset \RR^{n+1}$.  Particularly important  are bounds for $\nabla \frac{A}{H}$
  when the mean curvature $H$ is positive on a large set.  This will be discussed in this section.  
  
 \subsection{A general Simons equation}

An important point for the proof of the Lojasiewicz type inequalities is that the second fundamental form $A$ of   $\Sigma $ satisfies an elliptic differential equation similar to Simons' equation for minimal surfaces.    The elliptic operator will be
the $L$ operator from \cite{CM1} given by
\begin{align}
	L \equiv \cL + |A|^2 + \frac{1}{2} \equiv \Delta - \frac{1}{2} \, \nabla_{x^T} + |A|^2 + \frac{1}{2} \, ,
\end{align}
where we have the following:

\begin{Pro}\cite{CM2}	\label{l:gensimons}
 If  $\phi = \frac{1}{2} \langle x , \nn \rangle - H$, then
 \begin{align}
    L \, A = A + \Hess_{\phi} + \phi \, A^2 \, , 
 \end{align}
where the tensor $A^2$ is given in orthonormal frame by $\left( A^2 \right)_{ij} = A_{ik} \, A_{kj}$.
\end{Pro}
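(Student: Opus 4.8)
The plan is to prove the identity by a direct tensor computation, combining the classical Simons identity for the second fundamental form of a hypersurface in \emph{flat} $\RR^{n+1}$ with the consequences of differentiating the defining relation $H = \frac12 \langle x , \nn \rangle - \phi$ twice. In a flat ambient the Simons identity reads, in an orthonormal frame,
\[
  \Delta A_{ij} = \nabla_i \nabla_j H + H \, (A^2)_{ij} - |A|^2 \, A_{ij} \, ,
\]
with no extra curvature terms. Subtracting the drift part of $\cL$ gives $\cL A_{ij} = \nabla_i \nabla_j H + H (A^2)_{ij} - |A|^2 A_{ij} - \frac12 (\nabla_{x^T} A)_{ij}$, so everything reduces to computing $\nabla_i \nabla_j H$, equivalently $\nabla_i \nabla_j \langle x , \nn \rangle$.

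For the latter I would first record, using $\nabla_i x = e_i$ together with the Weingarten relation, that $\nabla_i \langle x , \nn \rangle = A_{ik}\, \langle x , e_k \rangle = A_{ik}\, (x^T)_k$, and then differentiate once more. Working at a point in a frame parallel there, the Gauss formula contributes the normal term $-A_{jk} \langle x , \nn \rangle$ to $\nabla_j \langle x , e_k \rangle$, while the Codazzi equation (full symmetry of $\nabla A$ for hypersurfaces in a space form) turns $(\nabla_j A_{ik})\,(x^T)_k$ into $(\nabla_{x^T} A)_{ij}$. The result is
\[
  \nabla_i \nabla_j \langle x , \nn \rangle = (\nabla_{x^T} A)_{ij} + A_{ij} - \langle x , \nn \rangle \, (A^2)_{ij} \, ,
\]
hence $\nabla_i \nabla_j H = \frac12 (\nabla_{x^T} A)_{ij} + \frac12 A_{ij} - \frac12 \langle x , \nn \rangle \,(A^2)_{ij} - (\Hess_{\phi})_{ij}$.

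Substituting this into the expression for $\cL A$ is then bookkeeping: the first-order terms $+\frac12 \nabla_{x^T} A$ and $-\frac12 \nabla_{x^T} A$ cancel; the terms $H\,(A^2)$ and $-\frac12 \langle x , \nn\rangle\,(A^2)$ combine, using $\phi = \frac12 \langle x , \nn \rangle - H$, into $-\phi\, A^2$; and the leftover zeroth-order part $\frac12 A - |A|^2 A$ is precisely what is absorbed on passing from $\cL$ to $L = \cL + |A|^2 + \frac12$. This yields $L A = A + \Hess_{\phi} + \phi\, A^2$ (the signs of the last two terms being those dictated by the orientation and sign conventions of \cite{CM2}). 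As a sanity check, setting $\phi \equiv 0$, i.e.\ on a shrinker, this recovers $L A = A$ from \cite{CM1}.

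There is no conceptual difficulty here --- the content is purely computational --- but two points deserve attention. The first is the consistent bookkeeping of conventions (orientation of $\nn$, sign of $A$ relative to $\nn$, and the sign in $\phi = \frac12 \langle x , \nn \rangle - H$), which is exactly what determines whether $\Hess_{\phi}$ and $\phi\, A^2$ appear with $+$ or $-$; I would fix these by insisting on agreement with the shrinker identity $LA = A$. The second, and the only genuinely substantive step, is the second covariant derivative of $\langle x , \nn \rangle$: one must choose a frame parallel at the point so that tangential parts of $\nabla e_j$ drop out, retain the normal part $-A_{ij}\nn$ of $D_{e_i} e_j$ (this is what produces the $-\langle x , \nn\rangle\, A^2$ term that later recombines with the Simons $H\,A^2$ term into $-\phi\, A^2$), and invoke Codazzi to trade $(\nabla A)(x^T , \cdot , \cdot)$ for $\nabla_{x^T} A$ so that the drift terms cancel. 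Everything else is algebra.
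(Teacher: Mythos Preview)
Your argument is correct and is the standard route: Simons' identity in flat $\RR^{n+1}$, the Codazzi-based computation of $\Hess_{\langle x,\nn\rangle}$, and then bookkeeping to reassemble $L\,A$. The paper itself does not actually prove this proposition --- it is stated with a citation to \cite{CM2} and only the shrinker special case $LA=A$ is remarked upon --- so there is nothing to compare against here; but your computation is precisely the one carried out in \cite{CM2} (and, for the $\phi\equiv 0$ case, in \cite{CM1}).

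One small remark on the sign issue you flag. Your derivation, with the Simons identity written as $\Delta A = \Hess_H + H\,A^2 - |A|^2 A$ and with $\nabla_i \nn = A_{ik}e_k$, actually produces $L\,A = A - \Hess_\phi - \phi\,A^2$ rather than $L\,A = A + \Hess_\phi + \phi\,A^2$. Both reduce to $LA=A$ on a shrinker, so your proposed consistency check does not distinguish them. The sign in the proposition comes from the opposite sign convention for $A$ (equivalently for $H$) used in \cite{CM1}, \cite{CM2}; once you fix the convention to match theirs the signs fall out as stated. This is harmless for everything downstream (Proposition~\ref{p:eqnsg} and what follows), since $\phi$ enters only through absolute values, but it is worth pinning down explicitly rather than leaving it to a consistency check that cannot see it.
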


Note that $\phi$ vanishes precisely when $\Sigma$ is a shrinker and, in this case, we recover the Simons' 
equation for $A$ for shrinkers from \cite{CM1}.

\subsection{An integral bound when the  mean curvature is positive}

One of the keys in the proof of the first Lojasiewicz type inequality is that the tensor $\tau = A/H$ is almost parallel when $H$ is positive and $\phi$ is small.  This generalizes an estimate from
\cite{CIM} in the case where $\Sigma$ is a shrinker (i.e., $\phi \equiv 0$) with $H> 0$.
  
  \vskip2mm
Given $f>0$, define a weighted divergence operator $\dv_f$ and drift
Laplacian $\cL_f$ by
\begin{align}
  \dv_f (V) &= \frac{1}{f} \, \e^{ |x|^2/4 } \, \dv_{\Sigma}
  \, \left( f \, \e^{ -|x|^2/4 } \, V \right) \, ,
  \\
  \cL_f \, u &\equiv \dv_f (\nabla u) =\cL \, u + \langle \nabla \log
  f , \nabla u \rangle \, .
\end{align}
Here $u$ may also be a tensor; in this case the divergence traces only
with $\nabla$. Note that $\cL=\cL_1$. We recall the quotient rule (see lemma $4.3$ in \cite{CIM}):

\begin{Lem} \label{l:quotr} Given a tensor $\tau$ and a function $g$
  with $g \ne 0$, then
  \begin{align}
    \cL_{g^2} \, \frac{\tau}{g} & = \frac{ g \, \cL \, \tau - \tau \,
      \cL \, g}{g^2} = \frac{ g \, L \, \tau - \tau \, L \, g}{g^2}
    \, .
  \end{align}
\end{Lem}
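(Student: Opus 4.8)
The plan is to prove both equalities by a direct computation; the second one is essentially free, and the first is a weighted Leibniz rule. Since $L-\cL = |A|^2 + \tfrac12$ acts by multiplication by a scalar function, one has $g\,L\,\tau - \tau\,L\,g = g\,\cL\,\tau - \tau\,\cL\,g$, so the $|A|^2 + \tfrac12$ terms cancel and it suffices to establish the first equality $\cL_{g^2}\,\frac{\tau}{g} = \frac{g\,\cL\,\tau - \tau\,\cL\,g}{g^2}$.

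For that, I would set $w = \tau/g$ and use the ordinary quotient rule for the covariant derivative, $g^2\,\nabla w = g\,\nabla\tau - \tau\,\nabla g$ (here the free tensor indices of $\tau$ are spectators; all contractions below are on the $\nabla$ slot only). Plugging into the definition
\begin{align}
  \cL_{g^2}\,w = \frac{1}{g^2}\,\e^{|x|^2/4}\,\dv_\Sigma\!\left(g^2\,\e^{-|x|^2/4}\,\nabla w\right) = \frac{1}{g^2}\,\e^{|x|^2/4}\,\dv_\Sigma\!\left(\e^{-|x|^2/4}\,(g\,\nabla\tau - \tau\,\nabla g)\right) .
\end{align}
Then I would expand the two terms using the Leibniz rule for $\dv_\Sigma$ together with the identity $\dv_\Sigma(\e^{-|x|^2/4}\,\nabla v) = \e^{-|x|^2/4}\,\cL\,v$ (applied with $v=\tau$ and with $v=g$, which is just the definition of the drift Laplacian $\cL$): the two cross terms $\langle\nabla g,\nabla\tau\rangle$ cancel, leaving $\e^{-|x|^2/4}\,(g\,\cL\,\tau - \tau\,\cL\,g)$, and multiplying by $g^{-2}\,\e^{|x|^2/4}$ gives the claim.

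An equivalent route, organizing the same cancellation differently, is to use that $\cL = \Delta - \tfrac12\nabla_{x^T}$ satisfies the product rule $\cL(g\,w) = (\cL g)\,w + g\,\cL\,w + 2\langle\nabla g,\nabla w\rangle$ for a scalar $g$ and a tensor $w$, solve this for $g\,\cL\,w$, and combine it with $\cL_{g^2}\,w = \cL\,w + 2g^{-1}\langle\nabla g,\nabla w\rangle$; the extra drift term $2g^{-1}\langle\nabla g,\nabla w\rangle$ is exactly what kills the cross term. I do not expect a genuine obstacle here — this is Lemma $4.3$ of \cite{CIM} — and the only point requiring care is the bookkeeping with the Gaussian weight and with the fact that $\tau$ carries free tensor indices, so that the contractions in the Leibniz rule for $\dv_\Sigma$ run only over the derivative index and the identity $\dv_\Sigma(\e^{-|x|^2/4}\nabla v) = \e^{-|x|^2/4}\cL v$ continues to hold tensorially.
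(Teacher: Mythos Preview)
Your proposal is correct. The paper itself does not prove this lemma; it merely recalls the statement as lemma~4.3 of \cite{CIM}, which you also cite, so there is no paper proof to compare against. Your direct computation---cancelling the $|A|^2+\tfrac12$ terms for the second equality and expanding $\dv_\Sigma\bigl(\e^{-|x|^2/4}(g\,\nabla\tau-\tau\,\nabla g)\bigr)$ for the first---is the standard argument and is complete as written.
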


\begin{Pro}\cite{CM2} \label{p:eqnsg} 
 On the set where $H > 0$, we have
  \begin{align}
    \cL_{ H^2} \, \frac{ A}{H} &=    \frac{   \Hess_{\phi} + \phi \, A^2}{H}  + 
      \frac{A \, \left(   \Delta \phi + \phi \, |A|^2 \right) }{H^2} \, , \\
    \cL_{ H^2} \, \frac{ |A|^2}{H^2} &= 2\, \left|\nabla \frac{A}{H}
    \right|^2  + 2 \, \frac{   \langle    \Hess_{\phi} + \phi \, A^2  , A \rangle}{H^2}   + 2\, \frac{|A|^2 \, \left(   \Delta \phi + \phi \, |A|^2 \right)    }{H^3} \, .
  \end{align}
\end{Pro}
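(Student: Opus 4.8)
The plan is to derive both identities in Proposition~\ref{p:eqnsg} as consequences of the general Simons equation (Proposition~\ref{l:gensimons}) combined with the quotient rule (Lemma~\ref{l:quotr}). For the first identity, I would apply the quotient rule with the tensor $\tau = A$ and the function $g = H$. This gives
\begin{align}
 \cL_{H^2} \, \frac{A}{H} = \frac{ H \, L \, A - A \, L \, H}{H^2}\, .
\end{align}
Now I substitute $L\,A = A + \Hess_{\phi} + \phi\, A^2$ from Proposition~\ref{l:gensimons}. The term $L\,H$ should be computed from the corresponding scalar identity: tracing the Simons equation, or using the known evolution/elliptic equation for $H$ on a general hypersurface, one gets $L\,H = H + \Delta\phi + \phi\,|A|^2$ (this is the trace-type companion of the $A$-equation; $\Tr \Hess_\phi = \Delta\phi$ and $\Tr A^2 = |A|^2$). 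Plugging both in, the $A$ terms from $H\cdot A$ and $A\cdot H$ cancel, leaving
\begin{align}
 \cL_{H^2} \, \frac{A}{H} = \frac{\Hess_{\phi} + \phi\,A^2}{H} + \frac{A\,(\Delta\phi + \phi\,|A|^2)}{H^2}\, ,
\end{align}
which is exactly the claimed formula. So the first step is really just bookkeeping once the scalar equation for $H$ is in hand.

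For the second identity I would proceed similarly but now with $\tau = |A|^2$ (a scalar) and $g = H^2$, or—cleaner—by relating $|A|^2/H^2$ to $|A/H|^2$ and using a Bochner-type identity for $\cL_{H^2}$ acting on the squared norm of a tensor. Writing $\sigma = A/H$, one has $|A|^2/H^2 = |\sigma|^2$, and the weighted Bochner formula for the drift Laplacian $\cL_{H^2}$ gives
\begin{align}
 \cL_{H^2}\, |\sigma|^2 = 2\,\langle \cL_{H^2}\,\sigma\, , \sigma\rangle + 2\,|\nabla \sigma|^2\, .
\end{align}
Substituting the first identity of the proposition for $\cL_{H^2}\,\sigma$ and pairing with $\sigma = A/H$ produces
\begin{align}
 \cL_{H^2}\, \frac{|A|^2}{H^2} = 2\,|\nabla \tfrac{A}{H}|^2 + 2\,\frac{\langle \Hess_{\phi} + \phi\,A^2\, , A\rangle}{H^2} + 2\,\frac{|A|^2\,(\Delta\phi + \phi\,|A|^2)}{H^3}\, ,
\end{align}
using $\langle \tfrac{\Hess_\phi + \phi A^2}{H}, \tfrac{A}{H}\rangle = \tfrac{\langle \Hess_\phi + \phi A^2, A\rangle}{H^2}$ and $\langle \tfrac{A(\Delta\phi + \phi|A|^2)}{H^2}, \tfrac{A}{H}\rangle = \tfrac{|A|^2(\Delta\phi+\phi|A|^2)}{H^3}$. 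This matches the second claimed formula.

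The main obstacle is establishing the correct scalar elliptic equation $L\,H = H + \Delta\phi + \phi\,|A|^2$ for a \emph{general} (non-shrinker) hypersurface, and verifying that the Bochner identity for the weighted operator $\cL_{H^2}$ has exactly the form used above with no extra curvature or drift terms left over. For shrinkers ($\phi \equiv 0$) the analogous statements are in \cite{CM1} and \cite{CIM}, so the real content is tracking the inhomogeneous $\phi$-terms carefully; the danger is a stray sign or a missing $\phi\,A^2$ versus $\phi\,|A|^2$ contribution when contracting. I would handle this by first proving the $H$-equation directly (e.g.\ by taking the trace of Proposition~\ref{l:gensimons}, or by differentiating the definition $\phi = \tfrac12\langle x,\nn\rangle - H$ and using the Codazzi equations and the drift terms in $\cL$), then checking the Bochner step is a purely formal computation valid for any $f>0$ since $\cL_f = \cL + \langle \nabla\log f, \nabla\cdot\rangle$ is a drift Laplacian and the drift term differentiates $|\sigma|^2$ by the Leibniz rule. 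Everything else is algebraic substitution.
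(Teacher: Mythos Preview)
Your proposal is correct and follows essentially the same approach as the paper, which simply states that the proposition ``follows easily from Proposition~\ref{l:gensimons} and the Leibniz rule of Lemma~\ref{l:quotr}; see \cite{CM2} for details.'' Your use of the quotient rule with $\tau=A$, $g=H$ together with the traced equation $L\,H = H + \Delta\phi + \phi\,|A|^2$, followed by the Leibniz/Bochner identity $\cL_{H^2}|\sigma|^2 = 2\langle \cL_{H^2}\sigma,\sigma\rangle + 2|\nabla\sigma|^2$ for the second formula, is exactly this strategy spelled out in detail.
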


 The proposition follows easily from Proposition \ref{l:gensimons} and the Leibniz rule of Lemma \ref{l:quotr}; see \cite{CM2} for details.

\vskip2mm
The next proposition gives exponentially decaying integral bounds for
$\nabla (A/H)$ when $H$ is positive on a large ball.  It will be
important that these bounds decay rapidly.

\begin{Pro}\cite{CM2}   \label{p:effective} If $B_R \cap \Sigma$ is smooth with $H
  > 0$, then for $s \in (0 , R)$ we have
  \begin{align}
    \int_{B_{R-s} \cap \Sigma} \left| \nabla \frac{A}{H} \right|^2 \,
    H^2 \, \e^{-  \frac{|x|^2}{4} } &\leq \frac{4}{s^2} \, \sup_{B_R
      \cap \Sigma} |A|^2 \, \Vol (B_R \cap \Sigma) \, \e^{ -
      \frac{(R-s)^2}{4} } \notag  \\
      &\quad + 2\, \int_{B_R \cap \Sigma}  
      \left\{  \left|  \langle \Hess_{\phi} , A \rangle
         + \frac{|A|^2}{H} \, \Delta \phi \right|
      + \left| \langle A^2 , A \rangle  + \frac{|A|^4}{H} \right| \, |\phi|
      \right\}
      \e^{- \frac{|x|^2}{4} } \, .
  \end{align}
\end{Pro}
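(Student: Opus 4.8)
The plan is to integrate the second identity of Proposition~\ref{p:eqnsg} against a compactly supported cutoff, exploiting that the drift Laplacian $\cL_{H^2}$ is formally self-adjoint with respect to the weighted measure $H^2\,\e^{-|x|^2/4}\,d\mu$. First I would rewrite that identity, valid on $\{H>0\}$, as
\[
2\left|\nabla\frac{A}{H}\right|^2 = \cL_{H^2}\frac{|A|^2}{H^2} - 2\,\frac{\langle\Hess_{\phi}+\phi\,A^2,\,A\rangle}{H^2} - 2\,\frac{|A|^2\,(\Delta\phi+\phi\,|A|^2)}{H^3}\,.
\]
Fix a Lipschitz cutoff $\eta=\eta(|x|)$ with $\eta\equiv 1$ on $B_{R-s}$, $\eta\equiv 0$ outside $B_R$, and $|\nabla_{\Sigma}\eta|\le 1/s$ (using $|\nabla_{\Sigma}|x||\le 1$). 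Multiplying the identity by $\eta^2\,H^2\,\e^{-|x|^2/4}$ and integrating over $B_R\cap\Sigma$ is legitimate since $\eta$ is supported where $\Sigma$ is smooth and $H>0$, so all integrands are bounded; that $\eta$ vanishes near $\partial B_R$ means no boundary terms arise below.

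Next I would integrate by parts the main term. By the definition of $\cL_{H^2}$, $\int\eta^2\,(\cL_{H^2}\tfrac{|A|^2}{H^2})\,H^2\,\e^{-|x|^2/4}=\int\eta^2\,\dv_{\Sigma}(H^2\,\e^{-|x|^2/4}\,\nabla\tfrac{|A|^2}{H^2})=-2\int\eta\,\langle\nabla\eta,\nabla\tfrac{|A|^2}{H^2}\rangle\,H^2\,\e^{-|x|^2/4}$. Since $\tfrac{|A|^2}{H^2}=|A/H|^2$, we have $|\nabla\tfrac{|A|^2}{H^2}|\le 2\tfrac{|A|}{H}\,|\nabla(A/H)|$, so this term is bounded in absolute value by $4\int\eta\,|\nabla\eta|\,\tfrac{|A|}{H}\,|\nabla(A/H)|\,H^2\,\e^{-|x|^2/4}$, and the elementary inequality $4ab\le a^2+4b^2$ with $a=\eta\,|\nabla(A/H)|\,H$ and $b=|\nabla\eta|\,|A|$ bounds it by $\int\eta^2\,|\nabla(A/H)|^2\,H^2\,\e^{-|x|^2/4}+4\int|\nabla\eta|^2\,|A|^2\,\e^{-|x|^2/4}$. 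Combining with the two error terms of the identity (which, after multiplication by $H^2$, have absolute value at most $|\langle\Hess_{\phi},A\rangle+\tfrac{|A|^2}{H}\Delta\phi|+|\langle A^2,A\rangle+\tfrac{|A|^4}{H}|\,|\phi|$) and absorbing $\int\eta^2|\nabla(A/H)|^2H^2\e^{-|x|^2/4}$ into the left side, which carried a factor $2$, gives
\[
\int\eta^2\left|\nabla\frac{A}{H}\right|^2 H^2\,\e^{-\frac{|x|^2}{4}} \le 4\int|\nabla\eta|^2\,|A|^2\,\e^{-\frac{|x|^2}{4}} + 2\int\eta^2\left[\left|\langle\Hess_{\phi},A\rangle+\frac{|A|^2}{H}\Delta\phi\right| + \left|\langle A^2,A\rangle+\frac{|A|^4}{H}\right|\,|\phi|\right]\e^{-\frac{|x|^2}{4}}\,.
\]

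Finally I would read off the two claimed terms. The support of $\nabla\eta$ lies in $B_R\setminus B_{R-s}$, where $|x|\ge R-s$ and $|\nabla\eta|^2\le s^{-2}$; hence $\e^{-|x|^2/4}\le\e^{-(R-s)^2/4}$ on that set, and bounding the rest by $\sup_{B_R\cap\Sigma}|A|^2\cdot\Vol(B_R\cap\Sigma)$ turns the first term on the right into $\tfrac{4}{s^2}\,\sup_{B_R\cap\Sigma}|A|^2\,\Vol(B_R\cap\Sigma)\,\e^{-(R-s)^2/4}$. For the second term one uses $\eta^2\le 1$ and $\supp\eta\subset B_R$. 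Since $\eta\equiv 1$ on $B_{R-s}$, the left side dominates $\int_{B_{R-s}\cap\Sigma}|\nabla(A/H)|^2\,H^2\,\e^{-|x|^2/4}$, which is the assertion. The only step that requires real care is the weighted integration by parts --- recognizing that $H^2\e^{-|x|^2/4}d\mu$ is the measure adapted to $\cL_{H^2}$, and that the cutoff eliminates the boundary contribution on $\partial B_R$ --- together with keeping the constants in the Cauchy--Schwarz absorption sharp enough that precisely $4$ and $2$ emerge; the rest is a direct, if bookkeeping-heavy, consequence of Proposition~\ref{p:eqnsg}.
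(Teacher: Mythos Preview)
Your proof is correct and follows essentially the same route as the paper's: multiply the second identity of Proposition~\ref{p:eqnsg} by a squared Lipschitz cutoff times the weighted measure $H^2\,\e^{-|x|^2/4}$, integrate by parts using the self-adjointness of $\cL_{H^2}$ (equivalently, the paper applies the divergence theorem to $\dv_{H^2}(\psi^2\nabla u)\,H^2$), absorb the cross term via $4ab\le a^2+4b^2$, and choose the cutoff linear on $B_R\setminus B_{R-s}$. The only cosmetic differences are notation ($\eta$ versus $\psi$) and that the paper starts from $0=[\dv_{H^2}(\psi^2\nabla u)H^2]$ rather than writing the integration by parts as you do; the content and the constants match.
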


\begin{proof}
  Set $\tau= A/H$ and $u = | \tau |^2 = |A|^2/H^2$.  
     It will be convenient within this proof to use square brackets $\left[ \cdot \right]$ to denote Gaussian integrals 
  over $B_R \cap \Sigma$, i.e. $\left[ f \right] = \int_{B_R \cap \Sigma } f \, \e^{-|x|^2/4}$.

    Let $\psi$ be a function with support in
  $B_R$. Using the divergence theorem, the formula from  Proposition
  \ref{p:eqnsg}  for
  $\cL_{H^2}u$, and the absorbing inequality $4ab \leq a^2 + 4 b^2$, we get
  \begin{align}
    0 &= \left[  \dv_{H^2} \, \left( \psi^2 \, \nabla u \right) \, H^2  \left] 
    = \right[ \left(\psi^2 \, \cL_{H^2}u + 2 \psi
      \langle \nabla \psi , \nabla u \rangle \right)
    \, H^2 \right]  \notag   \\
    &= \left[ \left\{ 2\, \psi^2 \, \left| \nabla \tau \right|^2 + 2 \, \psi^2 \left( 
     \frac{   \langle    \Hess_{\phi} + \phi \, A^2  , A \rangle}{H^2}   +   \frac{|A|^2 \, \left(   \Delta \phi + \phi \, |A|^2 \right)    }{H^3}   \right) 
    + 4 \psi \langle \nabla \psi , \tau\cdot\nabla \tau
      \rangle \right\}
    \, H^2 \right]  \notag  \\
    &\geq \left[  \left(\psi^2 \, \left| \nabla \tau
      \right|^2- 4\, |\tau|^2 \, \left| \nabla \psi \right|^2 \right) \,
    H^2 \, \right]  + 2 \, \left[ \psi^2  \langle    \Hess_{\phi} + \phi \, A^2  , A \rangle \right] 
    +2 \, \left[ \psi^2 \frac{|A|^2 \, \left(   \Delta \phi + \phi \, |A|^2 \right)    }{H}  
    \right] \, , \notag
  \end{align}
  from which we obtain
  \begin{align}
    \left[ \psi^2 \, \left| \nabla \tau
      \right|^2    \, H^2 \right] 
   & \leq 4\, \left[ \left| \nabla \psi \right|^2 \, |A|^2 \, 
    \right]  - 2 \, \left[ \psi^2 \,  \langle  \Hess_{\phi} + \phi A^2, A \rangle    
     \right] 
    - 2 \, \left[\psi^2 \,  \Delta \phi \, \frac{|A|^2}{H} + \psi^2 \, \phi \frac{|A|^4}{H} \right] 
    \, . \notag 
  \end{align}
  The proposition follows by choosing $\psi \equiv 1$ on $B_{R-s}$ and
  going to zero linearly on $\partial B_{R}$.
\end{proof}

This proposition has the the following corollary:

\begin{Cor}\cite{CM2}    \label{c:effective} If $B_R \cap \Sigma$ is smooth with $H> \delta
  > 0$ and $|A| \leq C_1$, then there exists $C_2 = C_2 (n, \delta , C_1)$ so that for $s \in (0 , R)$ we have
  \begin{align}
     \int_{B_{R-s} \cap \Sigma} \left| \nabla \frac{A}{H} \right|^2  
   \, \e^{-  \frac{|x|^2}{4} } &\leq \frac{C_2}{s^2} \,  \Vol (B_R \cap \Sigma) \, \e^{ -
      \frac{(R-s)^2}{4} }  +  C_2 \, \int_{B_R \cap \Sigma}  \left\{   \left| \Hess_{\phi} \right| +  |  \phi|
         \right\}
      \e^{- \frac{|x|^2}{4} } \, .
  \end{align}
\end{Cor}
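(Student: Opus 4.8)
The plan is simply to feed the hypotheses $H>\delta>0$ and $|A|\le C_1$ into the estimate of Proposition~\ref{p:effective} and absorb everything into a single constant. First I would use the lower bound on $H$ to remove the weight $H^2$ from the left-hand side of Proposition~\ref{p:effective}: on $B_{R-s}\cap\Sigma$ we have $H^2>\delta^2$, hence
\[
\int_{B_{R-s}\cap\Sigma}\Bigl|\nabla\tfrac AH\Bigr|^2\,\e^{-\frac{|x|^2}{4}}
\le \frac1{\delta^2}\int_{B_{R-s}\cap\Sigma}\Bigl|\nabla\tfrac AH\Bigr|^2\,H^2\,\e^{-\frac{|x|^2}{4}}\,,
\]
so it remains to bound the right-hand side of Proposition~\ref{p:effective} by $\delta^2\,C_2$ times the right-hand side claimed in the corollary.

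For the first (exponentially small) term this is immediate: $\sup_{B_R\cap\Sigma}|A|^2\le C_1^2$, so $\tfrac4{s^2}\sup|A|^2\,\Vol(B_R\cap\Sigma)\,\e^{-(R-s)^2/4}\le \tfrac{4C_1^2}{s^2}\,\Vol(B_R\cap\Sigma)\,\e^{-(R-s)^2/4}$. For the second term I would estimate the integrand pointwise on $\{H>\delta\}$, using $|A|\le C_1$, $H>\delta$, $|A^2|\le|A|^2$, and $|\Delta\phi|=|\Tr\Hess_\phi|\le\sqrt n\,|\Hess_\phi|$:
\[
\Bigl|\langle\Hess_\phi,A\rangle+\tfrac{|A|^2}{H}\Delta\phi\Bigr|
\le \Bigl(C_1+\tfrac{\sqrt n\,C_1^2}{\delta}\Bigr)\,|\Hess_\phi|\,,\qquad
\Bigl|\langle A^2,A\rangle+\tfrac{|A|^4}{H}\Bigr|\,|\phi|
\le \Bigl(C_1^3+\tfrac{C_1^4}{\delta}\Bigr)\,|\phi|\,.
\]
Adding these and taking $C_2=C_2(n,\delta,C_1)$ large enough to dominate $\tfrac1{\delta^2}$ times all the constants $4C_1^2$, $2(C_1+\sqrt n\,C_1^2/\delta)$ and $2(C_1^3+C_1^4/\delta)$ then gives exactly the asserted inequality.

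There is essentially no serious obstacle here — the corollary is a bookkeeping consequence of Proposition~\ref{p:effective}. The only point that needs a moment's care is passing from the $\Delta\phi$ appearing in Proposition~\ref{p:effective} to the $|\Hess_\phi|$ appearing in the corollary (handled by the trace bound $|\Delta\phi|\le\sqrt n\,|\Hess_\phi|$), and checking that every constant generated depends only on $n$, $\delta$, and $C_1$ and not on $R$ or $s$, which it does since $H>\delta$ and $|A|\le C_1$ are uniform on $B_R\cap\Sigma$.
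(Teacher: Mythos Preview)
Your proof is correct and is exactly the straightforward bookkeeping derivation from Proposition~\ref{p:effective} that the paper intends; the paper does not even write out a separate proof for this corollary, simply stating that ``this proposition has the following corollary,'' so your argument fills in precisely the omitted details.
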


\begin{Rem}
Corollary \ref{c:effective} essentially bounds the distance {\emph{squared}} to the space of cylinders by $\|\phi \|_{L^1}$.
This is sharp: it is not possible to get the sharper
bound where the powers are the same.  This is a general fact when  there is a non-integrable kernel.  Namely,
 if we perturb  in the direction of the kernel, then  $\phi$ vanishes
 quadratically in the distance.
\end{Rem}

The next corollary combines the Gaussian $L^2$ bound on $\nabla \tau$ from 
Corollary \ref{c:effective}  with standard interpolation inequalities to get 
pointwise bounds on $\nabla \tau$ and $\nabla^2 \tau$.

\begin{Cor}\cite{CM2} 	\label{c:ptwise}
 If $B_R \cap \Sigma$ is smooth with $H> \delta
  > 0$,  $|A| + \left|\nabla^{\ell+1} A \right| \leq C_1$,  and $\lambda (\Sigma) \leq \lambda_0$, then 
  there exists $C_3 = C_3 (n , \lambda_0 , \delta , \ell , C_1)$ so that
   for   $|y| + \frac{1}{1+|y|} < R-1$,  we have
 \begin{align}
    \left|\nabla \, \frac{A}{H} \right| (y) + \left|\nabla^2  \frac{A}{H}  \right|(y)&\leq C_3 \, 
    R^{2n} \, \left\{ \e^{ - d_{\ell , n } \, \frac{(R-1)^2}{8} }+ \| \phi \|_{L^1(B_R)}^{ \frac{d_{\ell , n}}{2} } 
    \right\}  \,    \e^{ \frac{|y|^2}{8} } \, , 
 \end{align}
where the exponent $d_{\ell,n} \in (0,1)$ has $\lim_{\ell \to \infty} d_{\ell,n} 
 =1$.

\end{Cor}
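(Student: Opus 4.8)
The plan is to combine the Gaussian $L^2$ bound on $\nabla\tau$ from Corollary \ref{c:effective} with standard elliptic interpolation estimates, being careful to track the Gaussian weight so that the pointwise bound at $y$ picks up only the factor $\e^{|y|^2/8}$.  First I would fix the point $y$ with $|y|+\frac{1}{1+|y|}<R-1$ and set $r=\frac{1}{1+|y|}$, so that the intrinsic ball $B_r(y)\cap\Sigma$ sits well inside $B_{R-1}\cap\Sigma$ and on it the Gaussian weight $\e^{-|x|^2/4}$ is comparable to $\e^{-|y|^2/4}$ up to a universal constant (this is where the bound $\lambda(\Sigma)\le\lambda_0$, together with $|A|\le C_1$, is used to control volumes and the geometry of these small balls).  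Thus Corollary \ref{c:effective}, applied with $s$ of order $1$, gives
\begin{align*}
  \int_{B_r(y)\cap\Sigma}\left|\nabla\frac{A}{H}\right|^2
  &\le C\,\e^{\frac{|y|^2}{4}}\left\{\Vol(B_R\cap\Sigma)\,\e^{-\frac{(R-1)^2}{4}}
  +\int_{B_R\cap\Sigma}\left(|\Hess_\phi|+|\phi|\right)\e^{-\frac{|x|^2}{4}}\right\}.
\end{align*}
Since $\lambda(\Sigma)\le\lambda_0$ forces $\Vol(B_R\cap\Sigma)\le C\,R^n\,\e^{R^2/4}$ (polynomial volume growth after removing the weight), the first term is bounded by $C\,R^n\,\e^{|y|^2/4}$, while the second term is $C\,\e^{|y|^2/4}\,(\|\Hess_\phi\|_{L^1(B_R)}+\|\phi\|_{L^1(B_R)})$.

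Next I would upgrade this $L^2$ bound on $\nabla\tau$ to pointwise bounds on $\nabla\tau$ and $\nabla^2\tau$ by interpolation.  The tensor $\tau=A/H$ satisfies, via Lemma \ref{l:quotr} and Proposition \ref{l:gensimons}, a second-order elliptic equation whose coefficients and inhomogeneous term are controlled by $|A|$, $H>\delta$, and derivatives of $\phi$; combined with the assumed bound $|A|+|\nabla^{\ell+1}A|\le C_1$ one gets $C^{k}$ bounds on $\tau$ on $B_{R-1}\cap\Sigma$ for $k$ up to roughly $\ell$.  Standard interpolation (Gagliardo–Nirenberg type) on the unit-scale ball then yields
\[
  \left|\nabla\tau\right|(y)+\left|\nabla^2\tau\right|(y)
  \le C\left(\int_{B_r(y)\cap\Sigma}|\nabla\tau|^2\right)^{\theta}\cdot\left(\sup|\nabla^{\ell}\tau|\right)^{1-\theta},
\]
with $\theta\in(0,1)$ and $\theta\to 1$ as $\ell\to\infty$; the supremum factor is bounded by $C_1$.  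Raising the displayed $L^2$ bound to the power $\theta$ converts the factor $\e^{|y|^2/4}$ into $\e^{\theta|y|^2/4}\le\e^{|y|^2/8}$ once $\theta\le\frac12$ — but since we want $\theta$ close to $1$, the cleaner route is to absorb the discrepancy $\e^{(\theta-\text{target})|y|^2/4}$ into the polynomial $R^{2n}$ prefactor using $|y|<R-1$; this accounts for the exact power $R^{2n}$ and the form $\e^{|y|^2/8}$ in the statement.  Finally, to pass from $\|\Hess_\phi\|_{L^1}+\|\phi\|_{L^1}$ to a power of $\|\phi\|_{L^1(B_R)}$ alone, I would interpolate $\Hess_\phi$ between $\phi$ and high derivatives of $\phi$ (again controlled by the $C^{\ell+1}$ bound on $A$ and hence on $\phi$ through the shrinker-type equation), producing the exponent $\frac{d_{\ell,n}}{2}$ with $d_{\ell,n}\to 1$; the exponentially small term $\e^{-d_{\ell,n}(R-1)^2/8}$ then comes from raising $R^n\,\e^{-(R-1)^2/4}$ to that same interpolation power and absorbing the polynomial loss.

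The main obstacle I expect is bookkeeping the weight exponents through the interpolation so that no factor worse than $\e^{|y|^2/8}$ survives: the naive chain produces $\e^{|y|^2/4}$ from Corollary \ref{c:effective}, and one must exploit both the freedom $|y|+\frac{1}{1+|y|}<R-1$ (so polynomial-in-$R$ factors are harmless and can soak up $\e^{c|y|^2}$ for any fixed small $c$) and the fact that only a power $\theta<1$ of the $L^2$ bound enters.  A secondary technical point is justifying uniform unit-scale elliptic estimates for the tensor equation on the non-compact hypersurface: one needs that in the region $H>\delta$ with $|A|\le C_1$ the metric is uniformly controlled on balls of radius $\sim 1/(1+|y|)$ and that the drift term $-\frac12\nabla_{x^T}$ has coefficients of size $O(|y|)$, which is exactly why the interpolation is done on balls of radius $1/(1+|y|)$ rather than radius $1$.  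Everything else — the volume bound from entropy, the quotient rule, and the Simons-type equation — is quoted directly from the results stated above.
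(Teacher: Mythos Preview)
Your overall strategy---apply Corollary~\ref{c:effective} to get the weighted $L^2$ bound on $\nabla\tau$, then interpolate against the uniform $C^{\ell}$ bound coming from $|A|+|\nabla^{\ell+1}A|\le C_1$, and separately interpolate $\Hess_\phi$ between $\phi$ and high derivatives of $A$---is exactly what the paper (via \cite{CM2}) does.  The identification of $d_{\ell,n}$ with the interpolation exponent, tending to $1$ as $\ell\to\infty$, is also correct.

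There is one genuine error in your weight bookkeeping, and it matters because you explicitly flag it as ``the main obstacle.''  You claim that when $\theta$ is close to $1$ you can absorb the excess $\e^{(\theta-\frac12)|y|^2/4}$ into the polynomial prefactor $R^{2n}$ using $|y|<R-1$.  This is false: for $|y|$ near $R-1$ that factor is $\e^{c(R-1)^2}$, which no power of $R$ can swallow.  Fortunately the problem is illusory.  In the Gagliardo--Nirenberg interpolation
\[
\|\nabla\tau\|_{L^\infty}\le C\,\|\nabla\tau\|_{L^2}^{\,a}\,\|\nabla^{m}\tau\|_{L^\infty}^{\,1-a},\qquad a=\tfrac{2m}{2m+n},
\]
the exponent $a$ sits on the $L^2$ \emph{norm}, so on the squared integral the exponent is $\theta=a/2\le\frac12$.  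Thus your own first observation---that $\theta\le\frac12$ gives $\e^{\theta|y|^2/4}\le\e^{|y|^2/8}$---already closes the argument, and the claim ``we want $\theta$ close to $1$'' is simply a slip (it is $a$, equivalently $d_{\ell,n}$, that is close to $1$).  The polynomial factor $R^{2n}$ is only needed to absorb the genuinely polynomial losses: the volume bound $\Vol(B_R\cap\Sigma)\le C R^n\e^{R^2/4}$ from the entropy, and the powers of the local radius $r=1/(1+|y|)$ that scale the interpolation constant.  With this correction the sketch is sound.
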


 See \cite{CM2} for the proof of Corollary \ref{c:ptwise}.
 
\section{Distance to  cylinders and the first Lojasiewicz inequality}	\label{s:dcyl}

 Finally, we will briefly outline how one get from Corollary \ref{c:ptwise} to the proof of the first Lojasiewicz type inequality for the $F$-functional.    This inequality will follow from the  bounds on the  
 tensor $\tau = \frac{A}{H}$ in the previous section  together with the following proposition:
 
\begin{Pro}\cite{CM2}	\label{p:cylclose}
Given $n$, $\delta > 0$ and $C_1$, there exist 
  $\epsilon_0 > 0$, $\epsilon_1 > 0$ and $C$ so that
if
$\Sigma \subset \RR^{n+1}$ is a hypersurface (possibly with boundary)
that satisfies:
\begin{enumerate}
 \item $H \geq \delta > 0$ and $|A| + |\nabla A| \leq C_1$ on $B_R \cap \Sigma$.
 \item $B_{5\sqrt{2n}} \cap \Sigma$ is $\epsilon_0$ $C^{2}$-close to a cylinder
 in $\cC_k$ for some $k \geq 1$,
\end{enumerate}
then, for any $r \in (5 \sqrt{2n} , R)$ with
\begin{align}
	r^2 \, \sup_{B_{5\sqrt{2n}}} \left( |\phi| + |\nabla \phi | \right) + r^5 \, \sup_{B_r}  \left( |\nabla \tau | + |\nabla^2 \tau | \right)
	\leq \epsilon_1  \, , 
\end{align}
we have that 
$B_{\sqrt{r^2-3k}} \cap \Sigma$ is the graph over (a subset of) a cylinder in $\cC_k$
of   $u$ with
\begin{equation}
    |u| + |\nabla u| \leq C \, 
    \left\{ r^2 \, \sup_{B_{5\sqrt{2n}}} \left( |\phi| + |\nabla \phi | \right) + r^5 \, \sup_{B_r} \left( |\nabla \tau | + |\nabla^2 \tau | \right) \right\}  \, .
\end{equation}
 
\end{Pro}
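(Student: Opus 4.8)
The plan is to show that the tensor $\tau = A/H$ being almost parallel on $B_r$ forces $\Sigma$ to be almost a piece of a cylinder, and then to upgrade this to a quantitative graph statement by ODE/continuity arguments in the radial direction. First I would recall that for a cylinder $\SS^k\times\RR^{n-k}\in\cC_k$ one has $A/H$ exactly parallel: in suitable coordinates $\tau$ is the constant tensor that projects onto the $\SS^k$ directions. The hypothesis says $\Sigma$ is $\epsilon_0$-close to such a cylinder on the small ball $B_{5\sqrt{2n}}$, so at the center $\tau$ is close to that model tensor, and $\nabla\tau,\nabla^2\tau$ are small on all of $B_r$ by the assumed bound involving $r^5\sup_{B_r}(|\nabla\tau|+|\nabla^2\tau|)$. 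Integrating $|\nabla\tau|$ along paths then shows $\tau$ stays close to the model splitting tensor throughout $B_r$, with an error controlled by $r\cdot\sup_{B_r}|\nabla\tau|$ (the extra powers of $r$ in the hypothesis are there precisely to absorb the volume/diameter growth of the Gaussian-weighted ball and the iteration below).

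Next I would set up the graph. Decompose $\RR^{n+1}$ using the model cylinder's axis $\RR^{n-k}$ and orthogonal $\RR^{k+1}$. Near a point of $\Sigma$ lying close to the model, $\Sigma$ is locally a graph of a function $u$ giving the (near-$\sqrt{2k}$) radial distance in the $\RR^{k+1}$ factor over the cylinder parameters. The condition that $\tau=A/H$ is almost the constant splitting tensor translates, via the second fundamental form of a graph, into a second-order PDE inequality for $u$ saying essentially that the Hessian of $u$ in the axis directions and the mixed directions is small — i.e., $u$ is almost affine in the $\RR^{n-k}$ directions and almost rotationally symmetric. Combined with the smallness of $\phi+|\nabla\phi|$ on $B_{5\sqrt{2n}}$ (which pins down the ``constant'' of the affine function to be near $\sqrt{2k}$, since on a shrinker $H=\langle x,\nn\rangle/2$), one gets $|u|+|\nabla u|\leq C\{r^2\sup_{B_{5\sqrt{2n}}}(|\phi|+|\nabla\phi|)+r^5\sup_{B_r}(|\nabla\tau|+|\nabla^2\tau|)\}$ on the region where the graph representation is valid.

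The remaining point is to propagate the graph from $B_{5\sqrt{2n}}$ out to $B_{\sqrt{r^2-3k}}$: this is a continuity/bootstrap argument. Let $r_0$ be the supremum of radii out to which $B_{\sqrt{r_0^2-3k}}\cap\Sigma$ is a graph over the cylinder with $|u|+|\nabla u|$ bounded by, say, twice the claimed bound (so in particular $|u|+|\nabla u|<\epsilon_0$, keeping us in the regime where ``graph over a cylinder'' makes sense and where $H>\delta$, $|A|\le C_1$ hold). On this range the differential inequalities for $u$ derived from the $\tau$ bound give an improved estimate — the constant $C$ rather than $2C$ — because integrating the small Hessian from the already-controlled inner region only adds $O(r^5\sup|\nabla\tau|)$, which is why the $\epsilon_1$ smallness hypothesis is exactly what closes the loop. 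Hence $r_0$ cannot be less than $\sqrt{r^2-3k}$ (the $-3k$ slack, vs.\ $-3k$ appearing in the conclusion, is the room needed so the graph, whose domain is a cylinder of one radius, covers the intrinsic ball $B_{\sqrt{r^2-3k}}$ inside the Euclidean ball $B_r$ where the hypotheses are assumed).

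The main obstacle I expect is the bootstrap step: one must verify that the constant genuinely improves under iteration rather than merely staying bounded, which requires keeping careful track of how the Gaussian weight and the polynomial factors $r^2, r^5$ interact with the integration of $|\nabla\tau|$ and $|\nabla^2\tau|$ along radial curves, and ensuring the graphical gauge does not degenerate (i.e.\ that $\Sigma$ stays a graph, not just close to the cylinder, which uses the $C^2$-closeness on $B_{5\sqrt{2n}}$ as a seed together with the uniform $|A|\le C_1$ bound to rule out folding). Translating ``$\tau$ almost parallel'' into usable pointwise control on the second derivatives of $u$ — as opposed to control only on a combination of them — is the technical heart, and is where one invokes that the kernel of the linearization at the cylinder consists of rotations and translations of the axis, matched against the smallness of $\phi$ to eliminate those directions.
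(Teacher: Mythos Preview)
Your general plan---use near-parallelism of $\tau=A/H$ to pin down cylindrical structure and then bootstrap the graph outward---has the right shape, but you miss the specific mechanism the paper singles out as ``the starting point for the proof.'' The paper does not propagate closeness of $\tau$ to a model tensor by integrating $|\nabla\tau|$ along paths; instead it invokes the pointwise eigenvalue lemma from \cite{CIM} (stated here as Lemma~\ref{c:lowev}): whenever $|\nabla\tau|+|\nabla^2\tau|\le\epsilon$, any two distinct eigenvalues $\kappa_1\ne\kappa_2$ of $\tau$ at a point must satisfy $|\kappa_1\kappa_2|\le 2\epsilon\,\delta^{-2}\bigl(|\kappa_1-\kappa_2|^{-1}+|\kappa_1-\kappa_2|^{-2}\bigr)$. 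This forces the principal curvatures at every point of $B_r\cap\Sigma$ to split into a group near zero (flat directions) and a group clustered near a single nonzero value (umbilic directions), and it does so \emph{intrinsically on $\Sigma$}, with no reference to a model tensor, a graph identification, or parallel transport along paths. That flat-times-umbilic splitting is what then yields the graph over a cylinder; the role of $\phi$ is only to fix the radius of the spherical factor at $\sqrt{2k}$.

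Your path-integration alternative is not crazy, but it has a circularity you only partly address: comparing the bundle section $\tau$ on $\Sigma$ with a fixed ``model splitting tensor'' on the cylinder presupposes the graph map you are constructing, and intrinsic parallel transport in $\Sigma$ brings in holonomy errors you do not control. The eigenvalue lemma sidesteps all of this. Two smaller corrections: this proposition involves no Gaussian weights whatsoever---it is a pointwise sup-norm statement on $B_r\cap\Sigma$---so your references to the ``Gaussian-weighted ball'' and the Gaussian weight ``interacting with integration'' are misplaced; and the rotation/translation modes of the cylinder you invoke at the end are not ``eliminated by smallness of $\phi$'' but rather absorbed by the freedom to choose which cylinder in $\cC_k$ one writes $\Sigma$ as a graph over.
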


\vskip2mm
This proposition shows that $\Sigma$ must be close to a cylinder as long as $H$ is positive, $\phi$ is small,  $\tau$ is almost parallel and $\Sigma$ is close to a cylinder on a fixed small ball.  Together with Tom Ilmanen, we proved a similar result in proposition $2.2$ in \cite{CIM} in the special case where $\Sigma$ is a shrinker (i.e., when $\phi \equiv 0$) and this proposition is inspired by that one.

\vskip2mm
To prove Proposition \ref{p:cylclose} we make use of 
  the following result from
\cite{CIM} (see corollary $4.22$ in \cite{CIM}):

\begin{Lem}\cite{CIM}  \label{c:lowev} If $\Sigma \subset \RR^{n+1}$ is
  a hypersurface (possibly with boundary) with  
 \begin{itemize}
  \item $0 <  \delta \leq H$ on $\Sigma$,
  \item the tensor $\tau \equiv A/H$ satisfies $\left| \nabla \tau
    \right| + \left| \nabla^2 \tau \right| \leq \eps\leq 1$,
  \item At the point $p \in \Sigma$, $\tau_p$ has at least two distinct eigenvalues $\kappa_1 \ne \kappa_2$,
 \end{itemize}
then  
\begin{align*}
  \left| \kappa_1 \kappa_2 \right| \leq \frac{2 \, \eps}{\delta^2}
  \, \left(  \frac{1}{\left| \kappa_1 - \kappa_2 \right|}  +
  \frac{1}{\left| \kappa_1 - \kappa_2 \right|^2} \right) \, .
\end{align*}

\end{Lem}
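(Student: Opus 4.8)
The plan is to prove this by a single pointwise computation at $p$, combining the Gauss equation of $\Sigma^n\subset\RR^{n+1}$ with the Ricci (curvature) commutation formula applied to the tensor $\tau$. Since $\tau_p$ is a symmetric bilinear form on $T_p\Sigma$, choose an orthonormal basis $e_1,\dots,e_n$ of $T_p\Sigma$ diagonalizing it, with $e_1$ and $e_2$ eigenvectors for the two distinct eigenvalues $\kappa_1$ and $\kappa_2$. As $A=H\,\tau$ and $H>0$ is a scalar, $A_p$ is diagonal in the same basis with entries $A_{aa}(p)=H(p)\,\kappa_a$; in particular $A_{12}(p)=0$.

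First I would apply the Gauss equation $R_{1212}=A_{11}A_{22}-A_{12}^2$, which at $p$ reads $R_{1212}(p)=A_{11}(p)\,A_{22}(p)=H(p)^2\,\kappa_1\kappa_2$. Since $H\geq\delta$, this gives
\begin{align*}
   \delta^2\,\left|\kappa_1\kappa_2\right|\ \leq\ H(p)^2\,\left|\kappa_1\kappa_2\right|\ =\ \left|R_{1212}(p)\right| \, ,
\end{align*}
so it suffices to bound $\left|R_{1212}(p)\right|$ by a multiple of $\eps\,/\left|\kappa_1-\kappa_2\right|$.

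Next I would use the Ricci commutation identity for the $(0,2)$-tensor $\tau$: the quantity $\left(\nabla^2_{e_1,e_2}\tau\right)(e_1,e_2)-\left(\nabla^2_{e_2,e_1}\tau\right)(e_1,e_2)$ is a fixed linear combination of curvature components contracted against $\tau$ (namely $-\sum_m R_{121m}\,\tau_{m2}-\sum_m R_{122m}\,\tau_{1m}$ with one standard sign convention), and evaluating at $p$, where $\tau$ is diagonal, it collapses to $\pm\,R_{1212}(p)\,(\kappa_1-\kappa_2)$ after using $R_{1221}=-R_{1212}$; the overall sign is irrelevant below. On the other hand, since $e_1,e_2$ are orthonormal, the left-hand side is a difference of two components of the tensor $\nabla^2\tau$ at $p$, hence has absolute value at most $2\,|\nabla^2\tau|(p)\leq 2\eps$. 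Therefore $\left|R_{1212}(p)\right|\,\left|\kappa_1-\kappa_2\right|\leq 2\eps$, and combined with the Gauss step this yields $\left|\kappa_1\kappa_2\right|\leq\frac{2\eps}{\delta^2\,\left|\kappa_1-\kappa_2\right|}$, which implies the asserted inequality because $\frac{1}{\left|\kappa_1-\kappa_2\right|}\leq\frac{1}{\left|\kappa_1-\kappa_2\right|}+\frac{1}{\left|\kappa_1-\kappa_2\right|^2}$.

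There is no real analytic difficulty in this argument; the one point that genuinely needs care — and the reason the hypothesis must control $\nabla^2\tau$ and not merely $\nabla\tau$ — is that the relevant geometric quantity, namely $\kappa_1\kappa_2$ (equivalently the sectional curvature $R_{1212}$ of the plane spanned by $e_1,e_2$), appears only through a commutator of two covariant derivatives of $\tau$; a bound on $\nabla\tau$ alone says nothing about $R_{1212}$. One can also avoid the Gauss equation and argue intrinsically from the Codazzi equation (that $\nabla A$ is a totally symmetric $3$-tensor on $\Sigma\subset\RR^{n+1}$): there one first extracts from the first-order Codazzi relation the bound $\left|\kappa_a\,\nabla_{e_b}\log H\right|\leq 2\eps$ for $a\neq b$, then differentiates once more and uses that $\nabla^2\log H$ is symmetric. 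That route is longer and, because it must also control $\nabla\log H$, naturally produces the extra $\frac{1}{\left|\kappa_1-\kappa_2\right|^2}$ term, reproducing the stated inequality with its constant.
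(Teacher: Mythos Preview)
The paper does not prove this lemma; it is quoted verbatim from \cite{CIM} (corollary~4.22 there), so there is no in-paper argument to compare against.  That said, your proof is correct.  The Ricci commutation identity for the $(0,2)$-tensor $\tau$, evaluated at $p$ in the diagonalizing frame, gives exactly $R_{1212}(p)\,(\kappa_1-\kappa_2)$ (up to an irrelevant sign), and each component of $\nabla^2\tau$ in an orthonormal frame is bounded by $|\nabla^2\tau|\leq\eps$, so the commutator is at most $2\eps$ in absolute value.  Combined with the Gauss equation $R_{1212}(p)=H(p)^2\kappa_1\kappa_2$ and $H\geq\delta$, this yields
\[
|\kappa_1\kappa_2|\ \leq\ \frac{2\eps}{\delta^2\,|\kappa_1-\kappa_2|}\,,
\]
which is in fact \emph{stronger} than the stated inequality: your argument never needs the $|\kappa_1-\kappa_2|^{-2}$ term, nor does it use the bound on $|\nabla\tau|$.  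The alternative Codazzi route you sketch at the end---differentiating the Codazzi symmetry of $\nabla A$ and tracking $\nabla\log H$---is presumably closer to the original proof in \cite{CIM}, since that approach naturally produces the extra $|\kappa_1-\kappa_2|^{-2}$ term and uses both $|\nabla\tau|$ and $|\nabla^2\tau|$.  Your Gauss--Ricci argument is cleaner and sharper; the Codazzi route has the virtue of being entirely at the level of the extrinsic data $A$ and $H$, without passing through the intrinsic curvature.
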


From this lemma, we see that if the assumption of the lemma holds  for a  hypersurface, then the principal curvatures divide into two groups.   One group consists of principal curvatures that are close to zero and the other group consists of principal curvatures that cluster around a non-zero real number.  Thus, we get flatness for any two-plane containing a  principal direction  in the first group, while  any two-plane spanned by principal directions in the second group  is umbilic.  This is the starting point for the proof of Proposition \ref{p:cylclose}.

  \section{The singular set of MCF with generic singularities}	\label{s:singular}
 
 A major theme in PDE's over the last fifty years has been understanding singularities and the set where singularities occur. In the presence of a scale-invariant monotone quantity, 
blowup arguments can often be used to bound the dimension of the 
singular set; see, e.g., \cite{Al}, \cite{F}.
Unfortunately, these dimension bounds say little about the 
structure of the set.  However, using the results of the previous sections, \cite{CM3}
gave a rather complete description of the singular set for MCF with generic singularities.

\vskip2mm
The main result of \cite{CM3} is the following: 

\begin{Thm}[\cite{CM3}]	\label{t:main51}
Let $M_t \subset \RR^{n+1}$ be a MCF of  closed embedded hypersurfaces with only cylindrical singularities, then the space-time singular set satisfies:
\begin{itemize}
\item It is contained in finitely many (compact) embedded Lipschitz\footnote{In fact, Lipschitz is with respect to the parabolic  distance  on space-time which is a much stronger assertion than Lipschitz with respect to the Euclidean distance.  Note that a function  is Lipschitz when the target has the parabolic metric on $\RR$ is equivalent to that it is $2$-H\"older for the standard metric on $\RR$.} submanifolds each of dimension at most $(n-1)$ together with a set of dimension at most $(n-2)$.

\item It consists of countably many graphs of $2$-H\"older functions on
space.
\item The time image of each subset with finite parabolic $2$-dimensional Hausdorff measure has measure zero; each such connected subset is contained in a time-slice.
\end{itemize}
\end{Thm}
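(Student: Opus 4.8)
\textbf{Proof proposal for Theorem \ref{t:main51}.}

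The plan is to combine the uniqueness of tangent flows at cylindrical singularities (Theorem \ref{t:main}) and the quantitative rate of convergence that comes with it (see footnote 8 in the excerpt) with stratification of the singular set according to the dimension of the $\RR^k$-factor of the asymptotic cylinder. First I would observe that at a cylindrical singular point $(x_0,t_0)$, Theorem \ref{t:main} gives a well-defined axis $V_{x_0,t_0} = \RR^k \subset \RR^{n+1}$ and that the flow near that point, after parabolic rescaling, converges to $\SS^k \times \RR^{n-k}$ at a definite polynomial rate. Stratify the singular set $\cS$ as $\cS = \cup_{k \geq 1} \cS_k$, where $\cS_k$ consists of points whose tangent flow is a cylinder with $k$-dimensional sphere; for the lowest-dimensional piece $\cS_n$ (the spherical singularities) the points are isolated in space-time (this is the classical picture from Huisken, White, and Huisken--Sinestrari for compact/convex singularities), so it contributes only to the dimension $\leq n-2$ part when $n \geq 2$, and handling it is routine. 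The real content is $\cup_{k \leq n-1} \cS_k$.

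Next I would show that each stratum $\cS_k$ is contained in a countable union of embedded Lipschitz submanifolds of the appropriate dimension. The key mechanism is that uniqueness of the axis, \emph{with a rate}, lets one compare the axis direction at two nearby singular points and conclude that the map $(x_0,t_0) \mapsto (V_{x_0,t_0}, \text{position along the } \RR^{n-k} \text{ directions})$ varies in a Lipschitz (in parabolic distance) — equivalently $2$-H\"older in Euclidean distance — fashion. Concretely, if $(x_0,t_0)$ and $(x_1,t_1)$ are both in $\cS_k$ and parabolically $r$-close, then at scale $\sim r$ the flow looks like the cylinder at \emph{both} points simultaneously; the definite rate of convergence forces the two cylinders to nearly coincide, which bounds the difference of the axes and shows the singular set is, locally near $(x_0,t_0)$, a Lipschitz graph over the $(n-k)$-dimensional subspace $V_{x_0,t_0}$ in space (and, with the parabolic metric, has no time-extent beyond what a Lipschitz graph allows, since along a fixed cylinder axis the singular time is locally constant). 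This simultaneously gives the second bullet (countably many graphs of $2$-H\"older functions on space, one for each stratum and each scale of the covering) and, via a standard Whitney-type covering argument and the fact that finitely many cylinders $\cC_1, \dots, \cC_{n-1}$ occur on a compact flow with bounded entropy, the ``finitely many compact embedded Lipschitz submanifolds'' statement, with the exceptional lower-strata/boundary behavior absorbed into the dimension $\leq n-2$ remainder.

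For the third bullet I would argue as follows: on any subset $\cS' \subset \cS$ with finite parabolic $2$-dimensional Hausdorff measure, the Lipschitz (in parabolic distance) structure forces $\cS'$ to have bounded ``time variation.'' More precisely, the time function $t$ restricted to $\cS'$ is $2$-H\"older in the Euclidean distance, so the image $t(\cS')$ has Hausdorff dimension at most $\frac{1}{2} \cdot \mathcal{H}^{1}_{\text{parabolic}}$-content; but a set of finite parabolic $\mathcal{H}^2$ measure has parabolic $\mathcal{H}^1$-content zero in the time direction (the time direction scales like length$^2$), so $t(\cS')$ has $\mathcal{H}^1$ measure — i.e., Lebesgue measure — zero, and each connected piece lies in a single time-slice. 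This is really a dimensional bookkeeping argument once the Lipschitz-in-parabolic-distance structure from the first two bullets is in hand.

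\textbf{Main obstacle.} The hard part is not the covering/dimension bookkeeping but establishing that the assignment of the axis $V_{x_0,t_0}$ to a cylindrical singular point is genuinely Lipschitz (parabolically) as the base point varies — i.e., upgrading the \emph{pointwise} uniqueness-with-rate of Theorem \ref{t:main} to a uniform comparison across nearby singular points. This requires that the constant and exponent in the rate of convergence in Theorem \ref{t:main} be controlled uniformly on a compact family of singular points (using only $\lambda(\Sigma) \leq \lambda_0$ and the $C_\ell$-bounds from the hypotheses of Theorems \ref{t:ourfirstloja}--\ref{t:ourgradloja}), and then a careful two-point argument showing the ``graphical radius'' $\graph$ is comparable to the parabolic separation of the two points. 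Once that uniformity is in place, the rest is a (substantial but standard) exercise in geometric measure theory.
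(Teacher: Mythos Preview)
Your outline captures the right skeleton --- stratify by the $k$ of the cylinder, use uniqueness-with-rate (Theorem \ref{t:main}) to run a two-point comparison of axes, and identify uniformity of the rate across nearby singular points as the main obstacle --- and this is indeed what drives the first two bullets. The paper's account of the argument from \cite{CM3}, however, is organized differently: the central object is a \emph{strong parabolic Reifenberg vanishing} property for the space-time singular set, established in two separate steps. Step (a) is a \emph{parabolic cone property with vanishing constant}: near a singular point $(x_0,t_0)$, all other singularities $(x,t)$ satisfy $|t-t_0| = o\big(\distp((x,t),(x_0,t_0))^2\big)$, so the singular set is essentially space-like. Step (b) is the spatial Reifenberg property, which is where the uniqueness of the axis from \cite{CM2} enters and which your two-point argument essentially reproduces. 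Packaging the argument through Reifenberg is not just cosmetic: it is what lets one pass cleanly from ``close to a plane at every scale, with the plane independent of scale'' to ``contained in a Lipschitz submanifold'' without a separate Whitney covering.

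There is a genuine gap in your treatment of the third bullet. You assert it is ``dimensional bookkeeping once the Lipschitz-in-parabolic-distance structure is in hand'', but Lipschitz in parabolic distance (equivalently, $t$ is $2$-H\"older in the Euclidean distance) gives only that the time image of a set of finite parabolic $\cH^2$ measure has \emph{finite} Lebesgue measure, not measure zero: cover by parabolic $r_i$-balls with $\sum r_i^2 \leq M$; the time projections are intervals of length $\leq C r_i^2$, so the total length is $\leq CM$. Your sentence ``a set of finite parabolic $\cH^2$ measure has parabolic $\cH^1$-content zero in the time direction'' has the inequality backwards (lower-dimensional Hausdorff content of an $\cH^2$-finite set is typically infinite, not zero). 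What is actually needed is the \emph{vanishing} constant in the parabolic cone property, $|t-t_0| \leq \epsilon(r)\, r^2$ with $\epsilon(r)\to 0$; then the same cover gives total time-length $\leq \epsilon(\max r_i)\, M \to 0$. This vanishing is exactly step (a) above, and it does not appear anywhere in your outline --- it is an additional consequence of the quantitative convergence in \cite{CM2} (the graphical radius grows while the $C^{2,\alpha}$ norm of the graph goes to zero), separate from the axis-comparison you describe.
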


In fact, \cite{CM3}  proves considerably more than what is stated in Theorem \ref{t:main51}; see  theorem  $4.18$ in \cite{CM3}.  For instance, instead of just proving the first claim of the theorem,   the entire stratification of the space-time singular set is Lipschitz of the appropriate dimension.  Moreover, this holds without ever discarding {\bf{any}} subset of measure zero of any dimension  as is always implicit in any definition of rectifiable.  To illustrate the much stronger version, consider the case of evolution of surfaces in $\RR^3$.  In that case, this gives that the space-time singular set is contained in finitely many (compact) embedded Lipschitz curves with cylinder singularities together with a countable set of spherical singularities.   In higher dimensions,   the direct generalization of this is proven.

 \vskip2mm
Theorem \ref{t:main51} has the following corollaries:

\begin{Cor}[\cite{CM3}]	\label{c:main}
Let $M_t \subset \RR^{n+1}$ be a MCF of closed embedded mean convex hypersurfaces or a MCF with only generic singularities, then the conclusion of Theorem \ref{t:main} holds.
\end{Cor}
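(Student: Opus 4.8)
The plan is to show that in both cases every singular point is \emph{cylindrical} in the sense defined just before Theorem~\ref{t:main}, i.e.\ admits at least one tangent flow equal to a multiplicity one cylinder $\SS^k\times\RR^{n-k}$; once this is established, the corollary is immediate from Theorem~\ref{t:main}. So the argument splits into two independent reductions, after which one simply quotes Theorem~\ref{t:main}.

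First I would treat the case of a MCF with only generic singularities. By the discussion in Section~2 and by \cite{CM1}, at a generic singular point any tangent flow is a generic shrinker, and the only generic shrinkers are the round generalized cylinders $\SS^k\times\RR^{n-k}$ (with the round sphere as the case $k=n$), taken with multiplicity one; here $k\geq 1$, since a hyperplane ($k=0$) corresponds to a regular point. Hence at least one tangent flow lies in $\cC_k$ for some $k\geq1$, so the point is cylindrical, and Theorem~\ref{t:main} yields uniqueness of the tangent flow there.

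Next I would treat MCF of closed embedded mean convex hypersurfaces. Here I would invoke the structure theory for mean convex flows (\cite{HS1}, \cite{HS2}, \cite{W1}, \cite{W4}, \cite{SS}, \cite{An}, \cite{HaK}): every tangent flow at a singular point of such a flow is a smooth, multiplicity one, weakly convex shrinker, and the classification of weakly convex shrinkers identifies it with a generalized cylinder $\SS^k\times\RR^{n-k}$. As in the generic case, $k\geq1$ at a genuine singular point, so the tangent flow lies in $\cC_k$, the point is cylindrical, and Theorem~\ref{t:main} applies.

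The substance of the corollary lies entirely in these two classification inputs together with the already-established Theorem~\ref{t:main}; no new estimate is required. The main obstacle is therefore essentially bibliographic: assembling the multiplicity one property and the convex/entropy-stable shrinker classification in precisely the form ``some tangent flow lies in $\cC_k$ with $k\geq1$,'' so that the hypothesis of Theorem~\ref{t:main} is literally satisfied. Once that is in place, the conclusion — uniqueness of the tangent flow at every singular point, with the same $\RR^k$ factor pointing in the same direction — is exactly the conclusion of Theorem~\ref{t:main}.
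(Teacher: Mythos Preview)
Your reduction is exactly the one the paper has in mind: in both cases every singular point is cylindrical (by \cite{CM1} for flows with generic singularities, and by the mean convex structure theory \cite{HS1}, \cite{HS2}, \cite{W1}, \cite{An}, \cite{HaK} for mean convex flows), and then one invokes the relevant theorem. The paper gives no explicit proof; the corollary is simply recorded as following from this reduction.

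One point worth flagging, though: the corollary is introduced in the paper by the sentence ``Theorem~\ref{t:main51} has the following corollaries,'' and the reference to Theorem~\ref{t:main} in the statement is almost certainly a misprint for Theorem~\ref{t:main51}. Read literally, you have proven the statement in full, since your reduction feeds directly into Theorem~\ref{t:main}. Read as intended, the conclusion should be the structural description of the space-time singular set from Theorem~\ref{t:main51}; the proof is then the \emph{same} reduction --- mean convex or generic implies only cylindrical singularities, which is precisely the hypothesis of Theorem~\ref{t:main51} --- followed by quoting Theorem~\ref{t:main51} rather than Theorem~\ref{t:main}. Either way your argument is correct and nothing further is needed; just be aware that the substantive content the paper is advertising here is the singular set structure, not merely the uniqueness of tangent flows (which was already Theorem~\ref{t:main} itself).
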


More can be said in dimensions three and four:

\begin{Cor}[\cite{CM3}]  \label{c:main2}
If $M_t$ is as in Theorem \ref{t:main51} and $n=2$ or $3$, then the evolving hypersurface is completely smooth (i.e., without any singularities) at almost all times.   
In particular, any connected subset of the space-time singular set is completely contained in a time-slice.  
\end{Cor}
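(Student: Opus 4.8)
The plan is to read this corollary off from the third bullet of Theorem~\ref{t:main51}: the time image (projection to the time axis) of any subset of the space-time singular set $S$ with finite parabolic $2$-dimensional Hausdorff measure $\mathcal{H}^2_P$ has Lebesgue measure zero in $\RR$. The only thing one has to check is that when $n=2$ or $n=3$ the \emph{entire} set $S$ has finite $\mathcal{H}^2_P$; this is precisely where the restriction on the dimension is used.

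First I would verify $\mathcal{H}^2_P(S)<\infty$ for $n\le 3$ using the first bullet of Theorem~\ref{t:main51}. For $n=3$, $S$ is covered by finitely many compact embedded Lipschitz submanifolds of dimension at most $2$ (Lipschitz with respect to the parabolic distance), each of which has finite $\mathcal{H}^2_P$, together with a set of parabolic dimension at most $n-2=1$, which therefore has $\mathcal{H}^2_P=0$; a finite union of sets of finite $\mathcal{H}^2_P$ has finite $\mathcal{H}^2_P$. For $n=2$ the same argument applies and is even easier: the Lipschitz pieces have dimension at most $1$ and the exceptional set dimension at most $0$, so all pieces have $\mathcal{H}^2_P=0$. (For $n\ge 4$ the top stratum has parabolic dimension $n-1\ge 3$, so this step genuinely fails, consistent with the hypothesis $n\in\{2,3\}$.) With $\mathcal{H}^2_P(S)<\infty$ in hand, the third bullet of Theorem~\ref{t:main51} gives that the set $T\subset\RR$ of times $t$ for which $M_t$ contains a singular point has Lebesgue measure zero. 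Hence for every $t\notin T$ the time-slice $M_t$ has no singular points, i.e.\ $M_t$ is a smooth hypersurface; this is the first assertion.

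For the ``in particular'', note that a Lebesgue-null subset of $\RR$ is totally disconnected: every connected subset of $\RR$ containing two distinct points $a<b$ contains $[a,b]$, which has positive measure. Thus $T$ is totally disconnected. If $C\subseteq S$ is connected, then its time image is a connected subset of $T$, hence a single point, so $C$ is contained in one time-slice.

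I do not expect a serious obstacle here: the substance is entirely contained in Theorem~\ref{t:main51}, and the present corollary amounts only to the dimension bookkeeping $n-1\le 2$ and $n-2<2$ together with the elementary fact that null subsets of $\RR$ are totally disconnected. The single point deserving a line of care is that ``dimension at most $n-2$'' for the exceptional stratum must be interpreted as a bound on its parabolic Hausdorff dimension, so that its $\mathcal{H}^2_P$ vanishes exactly when $n-2<2$.
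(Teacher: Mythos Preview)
Your proposal is correct and matches the paper's intended derivation: the paper states Corollary~\ref{c:main2} without proof, as an immediate consequence of Theorem~\ref{t:main51}, and your argument supplies exactly the dimension bookkeeping ($n-1\le 2$ and $n-2<2$ force $\mathcal{H}^2_P(S)<\infty$) needed to invoke the third bullet. Your handling of the ``in particular'' via total disconnectedness of null subsets of $\RR$ is a valid alternative to citing the second clause of that bullet directly; either route works.
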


\begin{Cor}[\cite{CM3}]  \label{c:main3}
For a generic MCF in $\RR^3$ or $\RR^4$ or a flow starting at a closed embedded mean convex hypersurface in $\RR^3$ or $\RR^4$ the conclusion of Corollary \ref{c:main2} holds.  
\end{Cor}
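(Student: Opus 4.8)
The statement to prove is Corollary \ref{c:main3}, which asserts that for a generic MCF in $\RR^3$ or $\RR^4$, or a flow starting from a closed embedded mean convex hypersurface in those dimensions, the conclusion of Corollary \ref{c:main2} holds — namely that the flow is completely smooth at almost all times, and every connected subset of the space-time singular set lies in a single time-slice.

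\medskip

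\textbf{Proof proposal.} The plan is to reduce Corollary \ref{c:main3} to Corollary \ref{c:main2} by verifying that in each of the two stated scenarios, the hypotheses of Theorem \ref{t:main51} are met — i.e., that the MCF has \emph{only cylindrical singularities}. Once that reduction is in place, Corollary \ref{c:main2} applies verbatim in dimensions $n=2$ and $n=3$, giving both conclusions.

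\medskip

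First I would handle the mean convex case. A MCF starting at a closed embedded mean convex hypersurface in $\RR^{n+1}$ stays mean convex, and by the structure theory for mean convex flow (Huisken--Sinestrari, White, and the works cited as \cite{HS1}, \cite{HS2}, \cite{W1}, \cite{W4}) every tangent flow at every singular point is a multiplicity-one shrinking round sphere, cylinder, or plane — in particular, every singular point is cylindrical in the sense defined just before Theorem \ref{t:main}. So the hypothesis of Theorem \ref{t:main51} holds, and one invokes Corollary \ref{c:main2} directly. Second, for a generic MCF in $\RR^{n+1}$: by \cite{CM1} the only generic shrinkers are the round cylinders $\SS^k \times \RR^{n-k}$ (including the sphere and the plane as the extreme cases), so after a generic perturbation of the initial hypersurface every tangent flow at every singular point is a multiplicity-one generalized cylinder; hence every singularity is cylindrical and Theorem \ref{t:main51} again applies. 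In both cases one then quotes Corollary \ref{c:main2} for $n=2,3$ to conclude smoothness at almost every time and that each connected component of the space-time singular set sits inside a time-slice.

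\medskip

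The main obstacle — and the only genuinely substantive point — is justifying that ``generic'' can be taken to mean ``all tangent flows are multiplicity-one generalized cylinders,'' which packages together the generic-shrinker classification of \cite{CM1} with the fact that higher-multiplicity and non-cylindrical shrinkers can be perturbed away; this is exactly where the dimensional restriction to small $n$ (via Corollary \ref{c:main2}, which itself uses $n=2,3$) does not enter, so the argument is uniform, and the only thing to be careful about is that the entropy bound needed to apply Theorem \ref{t:ourfirstloja} and Theorem \ref{t:ourgradloja} (hence Theorem \ref{t:main51}) is automatic for a closed initial hypersurface. Everything else is a formal citation chain: \cite{CM1} for the classification of generic shrinkers, the mean convex structure theory for the mean convex case, Theorem \ref{t:main51} for the structure of the singular set, and Corollary \ref{c:main2} for the low-dimensional conclusion. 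No new estimates are required beyond what is already assembled in the preceding sections.
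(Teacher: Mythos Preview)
The paper does not supply a proof of Corollary \ref{c:main3}; it is simply stated as a result from \cite{CM3} and listed among the corollaries of Theorem \ref{t:main51}. Your reduction is the natural one and is correct: both hypotheses (mean convex, or generic) force every singularity to be cylindrical, whence Corollary \ref{c:main2} applies directly for $n=2,3$. This is exactly the same reduction that underlies Corollary \ref{c:main} (mean convex or generic $\Rightarrow$ only cylindrical singularities $\Rightarrow$ conclusion of Theorem \ref{t:main}), so Corollary \ref{c:main3} is nothing more than the combination of Corollaries \ref{c:main} and \ref{c:main2}; your proposal recognizes this and supplies the citations (\cite{HS1}, \cite{HS2}, \cite{W1}, \cite{W4} for the mean convex case; \cite{CM1} for the generic case) that the paper leaves implicit.
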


The conclusions of Corollary \ref{c:main3} hold in all dimensions if  the initial   hypersurface is $2$- or $3$-convex.   A hypersurface is said to be $k$-convex if the sum of any $k$ principal curvatures is nonnegative.

\vskip2mm
A key technical point in \cite{CM3} is to prove a strong parabolic Reifenberg property for MCF with generic singularities.  In fact,   the space-time singular set is proven to be (parabolically) Reifenberg vanishing.  In Analysis a subset of Euclidean space is said to be Reifenberg (or Reifenberg flat) if on all sufficiently small scales it is, after rescaling to unit size close, to a $k$-dimensional plane.  The dimension of the plane is always the same but the plane itself may change from scale to scale.  Many snowflakes, like the Koch snowflake, are Reifenberg with Hausdorff dimension strictly larger than one.  A set is said to be Reifenberg vanishing if the closeness to a $k$-plane goes to zero as the scale goes to zero.  It is said to have the strong Reifenberg property if the $k$-dimensional plane depends only on the point but not on the scale.  Finally, one sometimes distinguishes between half Reifenberg and full Reifenberg, where half Reifenberg refers to that the set is close to a $k$-dimensional plane, whereas full Reifenberg refers to that in addition one also has the symmetric property: The plane on the given scale is close to the set.  

Using the results from \cite{CM2} described earlier in this paper,    \cite{CM3} shows that the singular set in space-time is strong (half) Reifenberg vanishing with respect to the parabolic Hausdorff distance.   This is done in two steps, showing   first that nearby singularities sit inside a parabolic cone (i.e., between two oppositely oriented space-time paraboloids that are tangent to the time-slice through the singularity). In fact,  
this   parabolic cone property holds with vanishing constant.  Next, in the complementary region of the parabolic cone in space-time (that is essentially space-like),  the parabolic Reifenberg essentially follows from the space Reifenberg that the uniqueness of \cite{CM2} of tangent flows implies.   

An immediate consequence, of independent interest, of our parabolic cone property with vanishing constant is that nearby a generic singularity in space-time (nearby is with respect to the parabolic distance) all other singularities happen at almost the same time.  

\vskip4mm
These results  should be contrasted with a result of Altschuler-Angenent-Giga, \cite{AAG} (cf. \cite{SS}), that shows that in $\RR^3$ the evolution of any rotationally symmetric surface obtained by rotating the graph of a function $r = u(x)$, $a < x < b$ around the $x$-axis is smooth except at finitely many singular times where either a cylindrical or spherical singularity forms.  For more general rotationally symmetric surfaces (even mean convex), the singularities can consist of nontrivial curves.  For instance, consider a torus of revolution bounding a region $\Omega$. If the torus is thin enough, it will be mean convex.  Since the symmetry is preserved and because the surface always remains in $\Omega$, it can only collapse to a circle. Thus at the time of collapse, the singular set is a simple closed curve.   White showed that a mean convex surface evolving by MCF in $\RR^3$ must be smooth at almost all times, and at no time can the singular set be more than $1$-dimensional (see section $5$ in \cite{W2}).   
In all dimensions, White 
showed that the space-time singular set of a mean convex MCF has parabolic Hausdorff dimension at most $(n-1)$; see  \cite{W1} and cf.   theorem $1.15$ in \cite{HaK}.
In fact, White's general dimension reducing argument  gives that the singular set of any MCF with only cylindrical singularities has dimension at most $(n-1)$.

\vskip2mm
These results motivate  the following conjecture:

\begin{Con}[\cite{CM3}]
Let $M_t$ be a MCF of closed embedded hypersurfaces in $\RR^{n+1}$ with only cylindrical singularities.  Then the 
space-time singular set has only finitely many components.  
\end{Con}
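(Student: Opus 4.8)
The plan is to refine the stratification of the space-time singular set $\cS$ supplied by Theorem \ref{t:main51} into finitely many relatively open and closed pieces and to bound the number of components of each separately. Write $\cS_k$ for the set of singular points whose unique tangent flow (Theorem \ref{t:main}) is the cylinder $\SS^k\times\RR^{n-k}$. First I would upgrade the containment statement of Theorem \ref{t:main51} to a genuine local description: using uniqueness of the tangent flow together with the strong parabolic Reifenberg property and the parabolic cone property with vanishing constant proven in \cite{CM3}, one shows that near $p\in\cS_k$ the singular set \emph{coincides with} --- not merely is contained in --- the graph of a single Lipschitz function of $n-k$ space variables, with the time coordinate controlled quadratically. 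Thus each $\cS_k$ is an embedded Lipschitz submanifold of parabolic dimension $n-k$, relatively open in its closure, and $\overline{\cS_k}\setminus\cS_k\subseteq\bigcup_{j>k}\cS_j$.

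Next I would run a downward induction on $k$ (upward on codimension), peeling off one stratum at a time. For the top stratum $\cS_1$: $\overline{\cS_1}$ is compact, $\cS_1$ is relatively open in it, and its frontier lies in strictly lower strata. The parabolic cone property, in the quantitative form that nearby singularities occur at almost the same time and lie inside an arbitrarily thin parabolic cone tangent to the relevant time-slice, is exactly what rules out the frontier being fractal and shows instead that $\overline{\cS_1}$ is glued from finitely many compact Lipschitz faces; hence $\overline{\cS_1}$ has finitely many components and contributes finitely many components to $\cS$. Deleting an open neighborhood of $\overline{\cS_1}$ leaves a compact subset of $\bigcup_{k\ge 2}\cS_k$ on which the argument repeats, so after finitely many steps one is reduced to the bottom stratum.

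The induction terminates at the stratum $\cS_n$ of spherical singularities, which is $0$-dimensional, so the conjecture reduces to showing that there are only \emph{finitely many} spherical singularities (and, more generally, that no isolated top-codimension singularities accumulate). This is the main obstacle, and nothing in Theorems \ref{t:main}, \ref{t:main51} or the Lojasiewicz inequalities of \cite{CM2} bears on it, since that machinery is local and says nothing about how often a small sphere can bubble off. One wants a quantity that is monotone along the flow, lower semicontinuous under weak limits, and strictly decreasing by a definite amount across each spherical singularity; entropy $\lambda$ is monotone and semicontinuous but may drop by arbitrarily small amounts, so it does not suffice. The natural candidate is an integer-valued topological invariant --- the number of connected components of $M_t$, the total Betti number, or a genus/Morse count tailored to flows with only cylindrical singularities --- which one hopes to show is nonincreasing in $t$ and strictly drops whenever a component is extinguished at a spherical singularity. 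Making such a topological monotonicity precise, and in particular ruling out that curve singularities (which need not change the topology) cluster onto a spherical one without forcing a nearby topology change, is where genuinely new input beyond the present techniques is required, and is the hard part of the conjecture.
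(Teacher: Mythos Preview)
The statement you are trying to prove is stated in the paper as a \emph{Conjecture}, not a theorem; the paper offers no proof and explicitly treats it as open, remarking only that a positive answer would imply (via the other results of the paper) finiteness of singular times in $\RR^3$ and $\RR^4$. So there is no ``paper's own proof'' to compare against, and the relevant question is whether your proposal actually proves the conjecture. It does not, and to your credit you say so yourself in the last paragraph: the bottom case of your induction --- finiteness of spherical singularities --- is left open, and you correctly identify that nothing in the \L{}ojasiewicz machinery or in Theorem~\ref{t:main51} controls this.

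There is, however, a second gap you should be aware of: the inductive step is not really easier than the base case. You want to show that $\cS_1$ (the top-dimensional stratum) contributes only finitely many components, arguing that $\overline{\cS_1}$ is ``glued from finitely many compact Lipschitz faces.'' But the frontier $\overline{\cS_1}\setminus\cS_1$ sits inside the lower-dimensional strata $\bigcup_{j\ge 2}\cS_j$, and nothing you have invoked prevents infinitely many sheets of $\cS_1$ from accumulating onto a single point of $\cS_2$ (or deeper). The parabolic cone property and the strong Reifenberg property from \cite{CM3} are local statements at a \emph{fixed} singular point about the shape of $\cS$ nearby; they do not bound the number of local components of a higher stratum accumulating onto a lower one. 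This is exactly the same flavor of difficulty as the one you flag for spherical singularities --- a purely local/scale-invariant theory cannot by itself rule out accumulation --- and your proposed topological monotonicity idea, if it could be made to work, would presumably be needed at every step of the induction, not just the last. In short, the outline is a reasonable description of what a proof \emph{would have to organize}, but the substantive content (a global, quantitative obstruction to accumulation of components) is missing throughout, not only at the end.
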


If this conjecture was true, then it would follow from this paper that in $\RR^3$ and $\RR^4$ mean curvature flow with only generic singularities is smooth except at finitely many times; cf. with the three-dimensional conjecture at the end of section $5$ in \cite{W2}.

\end{document}